\newcommand{\GL}{\mathrm{GL}}
\newcommand{\spn}{\mathrm{span}}
\newcommand{\id}{\mathrm{id}}
\newcommand{\tr}{\mathrm{tr}}
\newcommand{\R}{{\mathbb{R}}}
\newcommand{\N}{{\mathbb{N}}}
\newcommand{\ff}{\mathbf{f}}
\newcommand{\xx}{\mathbf{x}}
\newcommand{\bb}{\mathbf{b}}
\begin{document}


\title{Structure and approximation properties of Laplacian-like matrices}

\author[1]{J. Alberto Conejero} 

\author[2]{Antonio Falc\'o}

\author[3]{Mar\'ia Mora-Jim\'enez}

\authormark{J.A. Conejero, A. Falc\'o, and M. Mora-Jim\'enez} 

\address[1,3]{\orgdiv{Instituto Universitario de Matemática Pura y Aplicada}, \orgname{Universitat Polit\`ecnica de Val\`encia}, \orgaddress{\state{Cam\'{\i} de Vera, s/n, 46022 Val\`encia}, \country{Spain}}}

\address[2]{\orgdiv{ESI International Chair@CEU‐UCH, Departamento de Matemáticas, Física y Ciencias Tecnológicas}, \orgname{Universidad Cardenal Herrera‐CEU, CEU Universities}, \orgaddress{\state{San Bartolomé 55, 46115 Alfara del Patriarca, Valencia}, \country{Spain}}}




\abstract[Abstract]{Many of today's problems require techniques that involve the solution of arbitrarily large systems $A\xx=\bb$. A popular numerical approach is the so-called Greedy Rank-One Update Algorithm, based on a particular tensor decomposition. The numerical experiments support the fact that this algorithm converges especially fast when the matrix of the linear system is Laplacian-Like. These matrices that follow the tensor structure of the Laplacian operator are formed by sums of Kronecker product of matrices 
following a particular pattern. Moreover, this set of matrices is not only a linear subspace it is a a Lie sub-algebra of a matrix Lie Algebra. In this paper, we characterize and give the main properties of this particular class of matrices. Moreover, the above results allow us to propose 
an algorithm to explicitly compute the orthogonal projection onto this subspace
of a given square matrix $A \in \R^{N\times N}.$}

\keywords{Matrix decomposition, Laplacian-like matrix, High dimensional Linear System, Matrix Lie Algebra, Matrix Lie Group.}


\maketitle

\footnotetext{\textbf{Abbreviations:} ALS, Alternating Least Square; GROU, Greedy Rank-One Updated; PGD, Proper Generalized Decomposition}

\section{Introduction}\label{sec1}

The study of linear systems is a problem that dates back to the time of the Babylonians, who used words like `length' or `width' to designate the unknowns without being related to measurement problems. The Greeks also solved some systems of equations, but using geometric methods \cite{history1}. Over the years, mechanisms to solve linear systems continued to be developed until the discovery of iterative methods, the practice of which began at the end of the 19th century, by the hand of the mathematician Gauss. The development of computers in the mid-20th century prompted numerous mathematicians to delve into the study of this problem \cite{history2, history3}.\medskip

Nowadays, linear systems are widely used to approach computational models in applied sciences, for example, in mechanics, after the discretization of a partial differential equation. There are, in the literature, numerous mechanisms to deal with this type of problem, such as matrix decompositions (QR decomposition, LU decomposition), iterative methods (Newton, quasi-Newton, \ldots), and optimization algorithms (stochastic gradient descendent, alternative least squares,\ldots), among others, see for instance \cite{leiserson2001introduction,strang2006linear,golub2013matrix}. However, most of them lose efficiency as the size of the matrices or vectors involved increases. This effect is known as the \textit{curse of the dimensionality problem}.\medskip 

To try to solve this drawback, we can use tensor-based algorithms \cite{Anthony}, since their use significantly reduces the number of operations that we must employ.  For example, we can obtain a matrix of size $100 \times 100$ (i.e. a total of $10.000$ entries), from two matrices of size $10 \times 10$ multiplied, by means the tensor product, $100 + 100 = 200$ entries \cite{graham}.\medskip

Among the algorithms based on tensor products strategies \cite{simoncini}, the Proper Generalized Decomposition (PGD) family, based on the so-called Greedy Rank-One Updated (GROU) algorithm \cite{greedy, introgreedy}, is one of the most popular techniques.  PGD methods can be interpreted as `a priori' model reduction techniques because they provide a way for the `a priori' construction of optimally reduced bases for the representation of the solution. In particular, they impose a separation of variables to approximate the exact solution of a problem without knowing, in principle, the functions involved in this decomposition \cite{PGD, PGD2}. The GROU procedure in the pseudocode is given in the Algorithm \ref{GROU_alg} (where $\otimes$ denotes the Kronecker product, that is briefly introduced in Section \ref{sectionGROA}).\medskip

\begin{algorithm}[h]
	\caption{Greedy Rank-One Update}\label{GROU_alg}
	\begin{algorithmic}[1]
		\Procedure{GROU}{$\mathbf{b}\in \mathbb{R}^{n_1\cdots n_d}, A \in \mathbb{R}^{n_1\cdots n_d\times n_1\cdots n_d}, \varepsilon > 0,\texttt{tol},\texttt{rank\_max}$}
		\State $\mathbf{r}_0 = \mathbf{b}$
		\State $\mathbf{x} = \mathbf{0}$
		\For{$i=0,1,2,\ldots,\texttt{rank\_max}$}
		\State $\textbf{y} = \arg \min_{ \mathbf{y} = \mathbf{y}_1 \otimes \cdots \otimes \mathbf{y}_d} \|\mathbf{r}_i -A \mathbf{y}\|_2^2$
		\State $\textbf{r}_{i+1} = \textbf{r}_i -A \mathbf{y}$ 
		\State $\mathbf{x}\leftarrow \mathbf{x} + \mathbf{y}$ 
		\If{$\|\mathbf{r}_{i+1} \|_2 < \varepsilon$ or $|\|\mathbf{r}_{i+1} \|_2
			-\|\mathbf{r}_{i} \|_2| < \texttt{tol}$} \textbf{goto} 13
		\EndIf
		\EndFor
		\State \textbf{return} $\mathbf{u}$ and $\|\mathbf{r}_{\texttt{rank\_max}}\|_2.$
		\State \textbf{break}
		\State \textbf{return} $\mathbf{u}$ and $\|\mathbf{r}_{i+1} \|_2$
		\EndProcedure
	\end{algorithmic}
\end{algorithm}

A good example is provided by the Poisson equation $-\Delta \phi = \ff$. Let us consider the following problem in $3$D,
\begin{equation}\label{poisson2}
\left\{ \,
\begin{matrix}
\begin{aligned}
\dfrac{\partial^2 \phi }{\partial x^2} + \dfrac{\partial^2 \phi}{\partial y^2} + \dfrac{\partial^2 \phi }{\partial z^2}= -\ff (x,y,z), \quad &\text{in} \quad \Omega=(0,1)^3,\\
\phi=0 \quad &\text{in} \quad \partial \Omega,
\end{aligned}
\end{matrix}
\right.
\end{equation}
where $\ff(x,y,z)=3 \cdot (2\pi)^2 \cdot \sin(2\pi x-\pi)\sin(2\pi y-\pi)\sin(2\pi z-\pi)$. This problem has a closed form solution
$$
\phi(x,y,z)=\sin(2\pi x-\pi)\sin(2\pi y-\pi)\sin(2\pi z-\pi).
$$
By using derivative approximations and finite difference methods, we can write the Poisson equation in discrete form as a linear system $A \cdot \phi_{ijk} = -\ff_{ijk}$, where the indices $i,j,k$ correspond to the discretization of $x,y$ and $z$ respectively, and $A$ is a block matrix (see \cite{greedy} for more details). In Figure \ref{poisson}, we compare the CPU time employed in solving this discrete Poisson problem using the GROU Algorithm and the Matlab operator $\xx=A$\textbackslash $\bb$, for different numbers of nodes in $(0,1)^3$.
\begin{figure}[h]
	\centering
	\includegraphics[width=0.8\columnwidth]{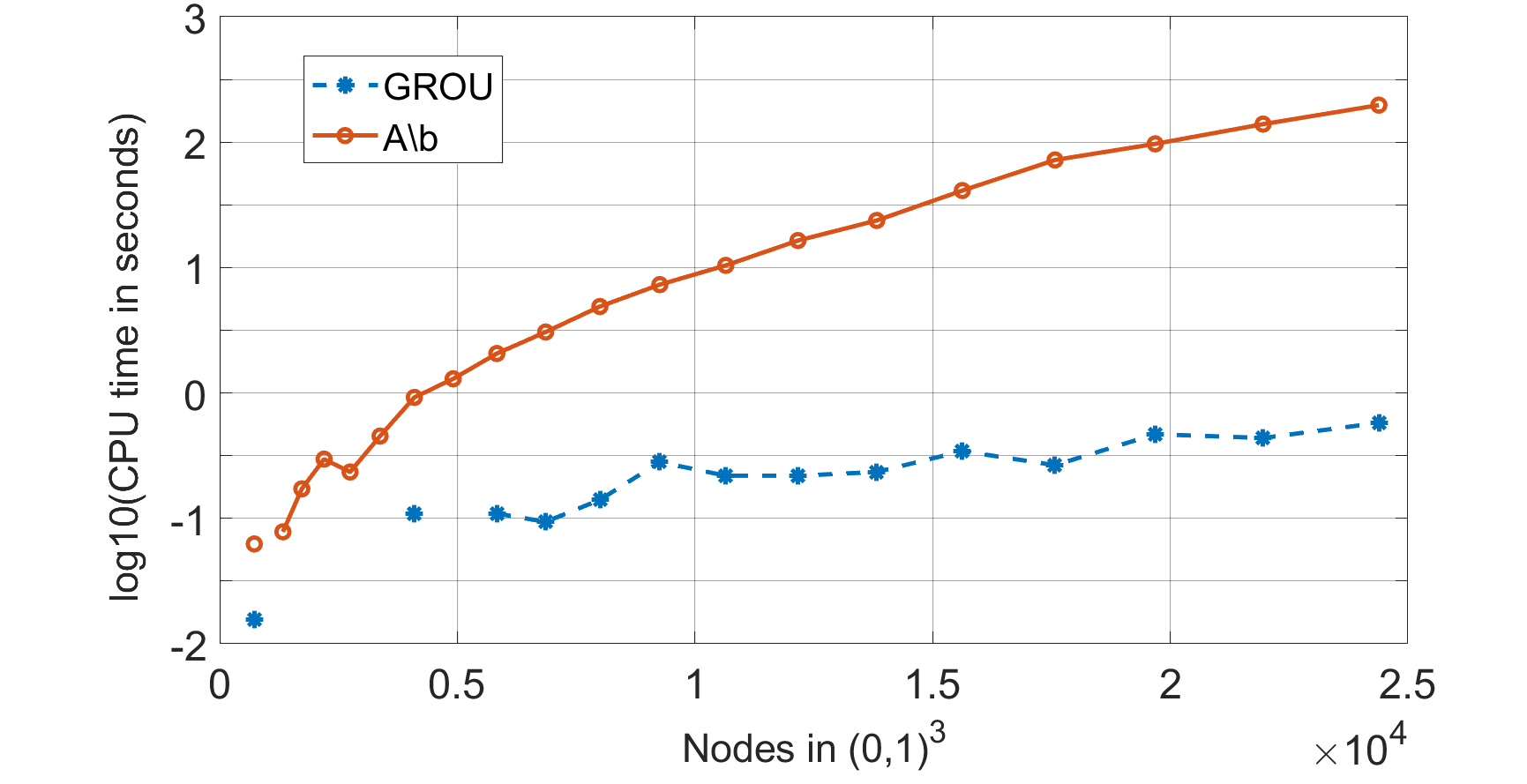} 
	\caption{CPU time comparative to solve the discrete Poisson equation. For this numerical test, we have used a computer with the following characteristics: 11th Gen Intel(R) Core(TM) i7-11370H @ 3.30GHz, RAM 16,0 GB, 64 bit operating system; and a Matlab version R2021b \cite{matlab}.}
	\label{poisson}
\end{figure}

So, we will use this fact to study if, for a given generic square matrix, a characterization can be stated such that we can decide whether is either Laplacian-like or not. Clearly, under a positive answer, we expect that the analysis of the associated linear system $A\mathbf{x}=\mathbf{b}$ would be simpler. 
This kind of linear operator also exists in infinite dimensional vector spaces to describe evolution equations in tensor Banach spaces \cite{Falcob}. Its main property is that the associated dynamical system has an invariant manifold, the manifold of elementary tensors (see \cite{Falc2018} for the details about its manifold structure).\medskip

Thus, the goal of this paper is to obtain a complete description of this linear space of matrices, showing that is, in fact, a Lie subalgebra of $\mathbb{R}^{N\times N}$, and provide an algorithm in order to obtain the best approximation to this linear space, that is, to compute explicitly is the orthogonal projection on that space.\medskip

The paper is organized as follows: in Section \ref{sectionGROA}, we introduce the linear subspace of Laplacian-like matrices and prove that it is also a matrix Lie sub-algebra associated to a particular Lie group. 
Then, in Section \ref{Lap_results}, we prove that 
any matrix is uniquely decomposed as the sum of a Laplacian matrix and a matrix which is the subspace generated by the identity matrix, and we show that any Laplacian matrix is a direct sum of some particular orthogonal subspaces.  Section \ref{results} is devoted, with the help of the results of the previous section, to propose an algorithm to explicitly compute the orthogonal projection onto the subspace of Laplacian-like matrices. To illustrate this result, we also give some numerical examples. 
Finally, in Section \ref{conclusions} some conclusions and final remarks are given. 


\section{The algebraic structure of Laplacian-Like matrices}\label{sectionGROA}

First of all, we introduce some definitions, that will be used along this work.

\medskip

\begin{definition}
Let $A \in \R^{M \times N}.$ Then, the Fröbenius norm (or the Hilbert–Schmidt norm) is defined as
$$
||A||_F=\sqrt{\sum_{i=1}^M \sum_{j=1}^N |a_{ij}|^2} = \sqrt{\tr\left(A^{\top}A\right)}.
$$
\end{definition}

The Fröbenius norm is the norm induced by the trace therefore, when $N=M$, we can work with 
the scalar product given by $\langle A,B\rangle=\tr\left(A^{\top}B\right)$. Let us observe that, in $\R^{N \times N}$,
\begin{enumerate}
    \item $\langle A,B\rangle_{\R^{N\times N}}=\tr\left(A^{\top}B\right)$
    \item $\langle A,\id_{N}\rangle_{\R^{N\times N}}=\tr(A)=\tr\left(A^{\top}\right)$
    \item $\langle \id_{N},\id_{N}\rangle_{\R^{N\times N}}=||\id_{N}||_F^2=N.$
\end{enumerate}

Given a linear subspace $\mathcal U \subset \mathbb{R}^{N \times N}$ we will denote: 
\begin{enumerate}
\item[(a)] the orthogonal complement
of $\mathcal U$ in $\mathbb{R}^{N \times N}$ by
$$
\mathcal U^{\bot} = \left\{ V \in \mathbb{R}^{N \times N}: \langle U , V\rangle_{\mathbb{R}^{N \times N}} = 0 \text{ for all } U \in \mathcal U \right\},
$$
and,
\item[(b)] the orthogonal projection of $ \R^{N \times N}$ on $\mathcal U$ as
$$
P_{\mathcal U}(V) := \arg \min_{U \in \mathcal U}\|U-V\|_F,
$$
and hence
$$
P_{\mathcal U^{\bot}} = \id_N - P_{\mathcal U}.
$$
\end{enumerate}

Before defining a Laplacian-like matrix, we recall that the \emph{Kronecker product} of two matrices $A \in \R^{N_1 \times M_1}$, $B \in \R^{N_2 \times M_2}$ is defined by
\begin{equation*}
    A \otimes B = 
    \begin{pmatrix}
    A_{1,1}B & A_{1,2}B & \dots & A_{1,M_1}B \\
    A_{2,1}B & A_{2,2}B & \dots & A_{2,M_1}B \\
    \vdots & \vdots & \ddots & \vdots \\
    A_{N_1,1}B & A_{N_1,2}B & \dots & A_{N_1,M_1}B \\
    \end{pmatrix} \in \R^{N_1N_2 \times M_1M_2}.
\end{equation*}
Some of the well-known properties of the Kronecker product are:
\begin{enumerate}
    \item $A \otimes (B \otimes C) = (A \otimes B) \otimes C$.
    \item $(A + B) \otimes C = (A \otimes C)+(B \otimes C)$.
    \item $AB \otimes CD = (A \otimes C)(B \otimes D)$.
    \item $(A \otimes B)^{-1}=A^{-1}\otimes B^{-1}$.
    \item $(A \otimes B)^{\top} = A^{\top} \otimes B^{\top}$.
    \item $\tr(A \otimes B) = \tr(A) \tr(B).$
\end{enumerate}

From the example given in the introduction, 
we observe that there is a particular type of matrices to solve high-dimensional linear systems for which the GROU algorithm works particularly well:  very fast convergence and also a very good approximation of the solution. These are the so-called Laplacian-Like matrices that we define below.

\medskip

\begin{definition} Given a matrix $A \in \mathbb{R}^{N \times N},$ where $N=n_1\cdots n_d,$ we say that
$A$ is a Laplacian-like matrix if there exist matrices $A_i \in \R^{n_i \times n_i}$ for $1 \le i \le d$
be such that
\begin{equation}\label{laplaciana}
A=\sum_{i=1}^d \mathrm{id}_{[n_i]} \otimes A_{i} \doteq \sum_{i=1}^d \mathrm{id}_{n_1} \otimes \dots \otimes \mathrm{id}_{n_{i-1}} \otimes A_{i} \otimes \mathrm{id}_{n_{i+1}} \otimes \dots \otimes \mathrm{id}_{n_d},
\end{equation}
where $\id_{n_j}$ is the identity matrix of size $n_j \times n_j.$ 
\end{definition}

It is not difficult to see that the set of Laplacian-like matrices is a linear subspace of $\mathbb{R}^{N \times N}$. From now on, we will denote  by $\mathcal{L}\left(\R^{N \times N}\right)$ the subspace of Laplacian-like matrices in $\R^{N \times N}$ for a fixed decomposition of $N=n_1\cdots n_d$.\medskip

These matrices can be easily related to the classical Laplacian operator \cite{laplace1,laplace2} by writing:
$$
\frac{\partial^2}{\partial x_i^2} = \frac{\partial^0}{\partial x_1^0} \otimes \dots \otimes \frac{\partial^0}{\partial x_{i-1}^0} \otimes \frac{\partial^2}{\partial x_i^2} \otimes\frac{\partial^0}{\partial x_{i+1}^0} \otimes \dots \otimes \frac{\partial^0}{\partial x_d^0}
$$
and where $\frac{\partial^0}{\partial x_j^0}$ is the identity operator for functions in the variable $x_j$ for $j\neq i$.\medskip

As the next numerical example shows, matrices written as in \eqref{laplaciana}
provides very good performance of the GROU algorithm.  In Figure \ref{ll_vs_full} we give a comparison of the speed of convergence to solve a linear system $A \mathbf{x} = \mathbf{b}$, where for each fixed size, we randomly generated two full-rank matrices: one given in the classical form and a Laplacian-like matrix. Both systems were solved following Algorithm~\ref{GROU_alg}.\medskip

\begin{figure}[h]
	\centering
	\includegraphics[width=0.83\columnwidth]{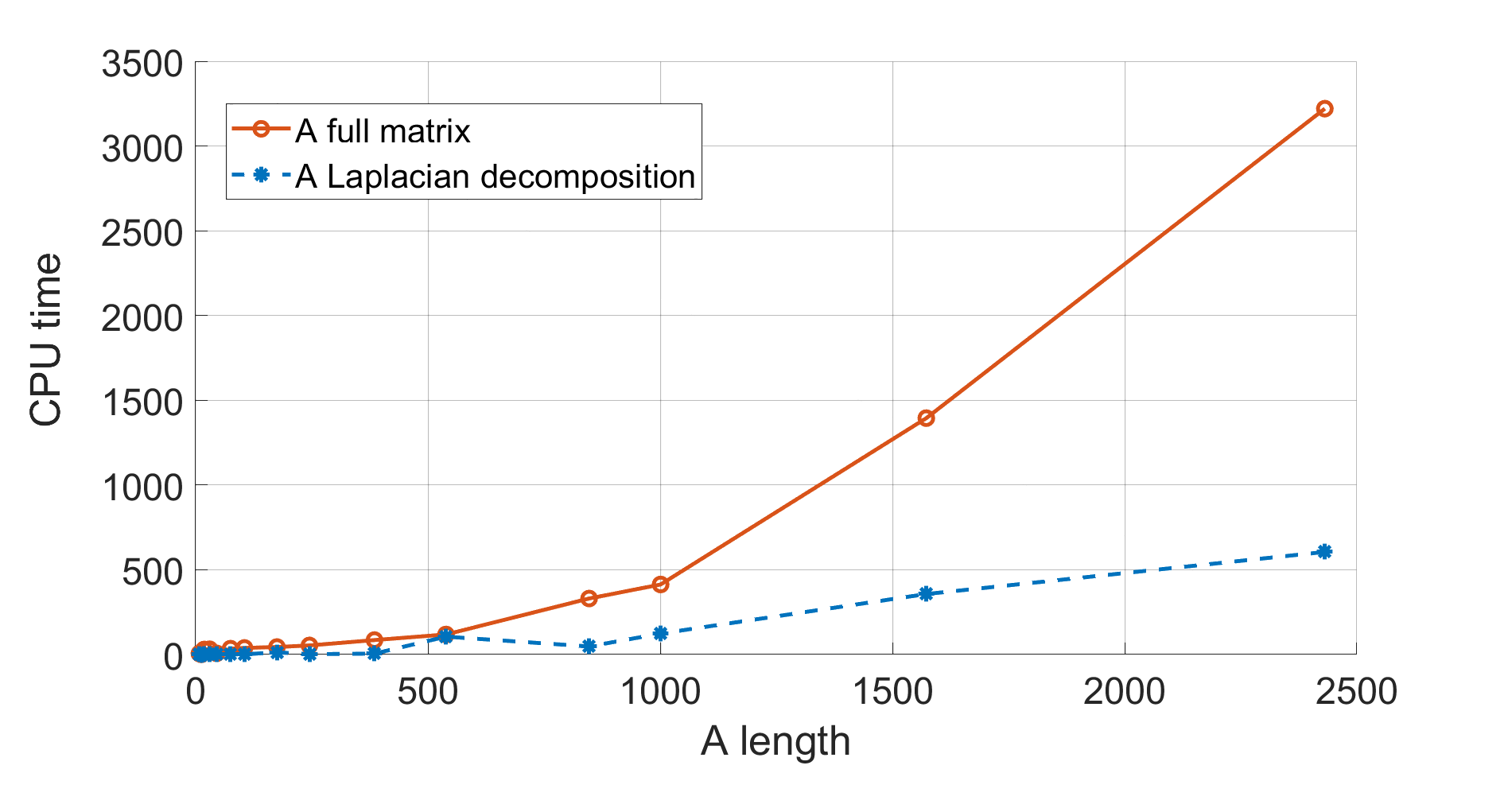} 
	\caption{CPU time comparative to solve an $A\xx=\bb$ problem. This graph has been generated by using the following data in Algorithm~\ref{GROU_alg}: $\texttt{tol} = 2.22e-6$; $\varepsilon = 1.0e-06$;  $\texttt{rank\_max} = 3000$; (an $\texttt{iter-max}=15$ was used to perform an ALS strategy); and the matrices have been randomly generated for each different size, in Laplacian and classical form. The characteristics of the computer used here are the same 
	as in the case of Figure 1.} 
	\label{ll_vs_full}
\end{figure}

The above results, together with the previous Poisson example given in the introduction, motivate the interest to know for a given matrix $A \in \mathbb{R}^{N \times N}$ how far it is from the linear subspace of Laplacian-like matrices. More precisely, we are interested in decomposing any matrix $A$ as a sum of two orthogonal matrices $L$ and $L^{\bot},$ where $L$ is in $\mathcal{L}(\mathbb{R})$ and $L^{\bot}$ in $\mathcal{L}(\mathbb{R})^{\bot}.$ Clearly, if we obtain that $L^{\bot} =0,$ that is, $A \in \mathcal{L}(\mathbb{R}),$ then we can solve any associated linear system by means of the GROU algorithm.\medskip

Recall that the set of matrices $\mathbb R^{N \times N}$ is a Lie Algebra that appears as the tangent space at the identity matrix of the linear general group $GL(\mathbb{R}^N),$ a Lie group composed by the non-singular matrices of $\mathbb{R}^{N \times N}$ (see \cite{Gallier2020}). Furthermore, the exponential map
$$
\exp: \mathbb R^{N \times N} \longrightarrow GL(\mathbb{R}^{N}), \quad A \mapsto \exp(A)= \sum_{n=0}^{\infty} \frac{A^n}{n!}
$$
is well-defined, however it is not surjective because $\det(\exp(A)) = e^{\tr(A)} > 0.$ Any linear subspace $\mathfrak{h} \subset \mathbb R^{N \times N}$ is a Lie-subalgebra if for all $A,B \in \mathfrak{h}$ its Lie crochet is also in $\mathfrak{h},$ that is, $[A,B]=AB-BA \in \mathfrak{h}$.
\medskip

The linear space $\mathcal{L}(\mathbb R^{N \times N})$ is more than a linear subspace of $\mathbb R^{N \times N},$ it is also a Lie sub-algebra of $\mathbb R^{N \times N}$ as the next result shows.

\begin{proposition}\label{Lie} Assume $\mathbb{R}^{N \times N},$ where where $N=n_1\cdots n_d.$ Then the following statements hold.
\begin{enumerate}
\item[(a)]The linear subspace $\mathcal{L}(\mathbb R^{N \times N})$ is a Lie subalgebra of the matrix Lie algebra $R^{N \times N}.$ 
\item[(b)] The matrix group
$$
\mathfrak{L}(\mathbb R^{N \times N})=\left\{
\bigotimes_{i=1}^d A_i: A_i \in GL(\mathbb R^{n_i}) \text{ for } 1 \le i \le d
\right\}
$$
is a Lie subgroup of $GL\left(\mathbb{R}^N\right).$
\item[(c)] The exponential map
$$
\exp:\mathcal{L}(\mathbb R^{N \times N}) \longrightarrow \mathfrak{L}(\mathbb R^{N \times N}), 
\quad \sum_{i=1}^d \mathrm{id}_{n_1} \otimes \dots \otimes \mathrm{id}_{n_{i-1}} \otimes A_{i} \otimes \mathrm{id}_{n_{i+1}} \otimes \dots \otimes \mathrm{id}_{n_d} \mapsto \bigotimes_{i=1}^d \exp(A_i),
$$
is well defined.
\end{enumerate}
\end{proposition}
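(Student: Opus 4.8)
The plan is to treat the three claims in the order $(a)$, $(c)$, $(b)$, since the commutativity relations established in $(a)$ feed directly into the exponential formula of $(c)$, while $(b)$ is most cleanly obtained by viewing $\mathfrak{L}(\R^{N\times N})$ as the image of a Lie group homomorphism whose differential recovers $\mathcal{L}(\R^{N\times N})$. Throughout I would abbreviate the elementary terms by writing, for $X\in\R^{n_i\times n_i}$,
\begin{equation*}
L_i(X):=\id_{n_1}\otimes\dots\otimes\id_{n_{i-1}}\otimes X\otimes\id_{n_{i+1}}\otimes\dots\otimes\id_{n_d},
\end{equation*}
so that a generic element of $\mathcal{L}(\R^{N\times N})$ is $\sum_{i=1}^d L_i(A_i)$ and each map $X\mapsto L_i(X)$ is linear. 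For part $(a)$, I would expand $[A,B]$ bilinearly as $\sum_{i,j}[L_i(A_i),L_j(B_j)]$ and evaluate each summand with the mixed-product property $AB\otimes CD=(A\otimes C)(B\otimes D)$. When $i=j$ this collapses the product slot by slot to $L_i(A_i)L_i(B_i)=L_i(A_iB_i)$, whence $[L_i(A_i),L_i(B_i)]=L_i([A_i,B_i])$, again an elementary term in slot $i$. When $i\ne j$ the same property shows that both orders of multiplication place $A_i$ in slot $i$ and $B_j$ in slot $j$ with identities elsewhere, so the two factors commute and the bracket vanishes. Summing gives $[A,B]=\sum_{i=1}^d L_i([A_i,B_i])\in\mathcal{L}(\R^{N\times N})$.

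Part $(c)$ then follows quickly. The computation in $(a)$ shows that the summands $L_1(A_1),\dots,L_d(A_d)$ pairwise commute, so $\exp\big(\sum_i L_i(A_i)\big)=\prod_{i=1}^d\exp(L_i(A_i))$. Because $L_i(X)^n=L_i(X^n)$ and $L_i$ is linear and continuous, I obtain $\exp(L_i(A_i))=L_i(\exp(A_i))$; one last application of the mixed-product property telescopes the product $\prod_i L_i(\exp(A_i))$ into $\bigotimes_{i=1}^d\exp(A_i)$. Since each $\exp(A_i)\in GL(\R^{n_i})$, the value lands in $\mathfrak{L}(\R^{N\times N})$, so the map is well defined and equals the stated formula.

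For part $(b)$ I would consider the map
\begin{equation*}
\Phi:\prod_{i=1}^d GL(\R^{n_i})\longrightarrow GL(\R^N),\qquad (A_1,\dots,A_d)\mapsto\bigotimes_{i=1}^d A_i,
\end{equation*}
whose image is exactly $\mathfrak{L}(\R^{N\times N})$. It is smooth with polynomial entries and, by the mixed-product property, a group homomorphism; closure under inverses is property $(4)$ of the Kronecker product, and $\det\big(\bigotimes_i A_i\big)=\prod_i(\det A_i)^{N/n_i}\ne0$ places the image inside $GL(\R^N)$. Invoking the standard fact that the image of a Lie group homomorphism is an (immersed) Lie subgroup then yields the claim. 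Computing $d\Phi$ at the identity along the curves $t\mapsto\bigotimes_i(\id_{n_i}+tX_i)$ gives $d\Phi_e(X_1,\dots,X_d)=\sum_i L_i(X_i)$, so the Lie algebra of $\mathfrak{L}(\R^{N\times N})$ is precisely $\mathcal{L}(\R^{N\times N})$, which ties $(a)$--$(c)$ together.

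The main obstacle is part $(b)$: the homomorphism $\Phi$ is not injective, since for instance $(cA_1)\otimes(c^{-1}A_2)=A_1\otimes A_2$ exhibits a nontrivial kernel of scalar factors, so $\mathfrak{L}(\R^{N\times N})$ need not be presented as an \emph{embedded} submanifold and one cannot naively appeal to Cartan's closed-subgroup theorem without first verifying closedness. The cleanest remedy is to use the image-of-a-homomorphism theorem directly, which produces an immersed Lie subgroup regardless of injectivity; care is then needed only in stating precisely which notion of Lie subgroup is intended.
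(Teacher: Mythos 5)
Your proof is correct, and parts (a) and (c) are essentially the paper's: (a) is the same slot-by-slot bracket computation via the mixed-product property, and for (c) you derive the identity $\exp\bigl(\sum_i L_i(A_i)\bigr)=\bigotimes_i\exp(A_i)$ from the pairwise commutativity established in (a), where the paper simply cites it as a known lemma from a reference — your version is self-contained but proves the same thing. The genuine divergence is in (b). The paper proves that $\mathfrak{L}(\R^{N\times N})$ is \emph{closed} in $GL(\R^N)$ by a sequential compactness argument and then invokes the closed-subgroup theorem, obtaining an embedded Lie subgroup; you instead realize $\mathfrak{L}(\R^{N\times N})$ as the image of the smooth homomorphism $\Phi$ and appeal to the fact that images of Lie group homomorphisms are immersed Lie subgroups, which buys you a shorter argument at the cost of a weaker (immersed rather than embedded) conclusion. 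Your stated reason for avoiding the closedness route is not actually an obstruction: the set of elementary Kronecker products is the set of decomposable (rank-one) tensors in $\R^{n_1\times n_1}\otimes\cdots\otimes\R^{n_d\times n_d}$, which is closed in $\R^{N\times N}$, so its intersection with $GL(\R^N)$ is closed and Cartan's theorem applies after all. That said, the scalar-rescaling non-uniqueness you point out is a real subtlety — it is precisely the gap in the paper's own claim that boundedness of $A_n=\bigotimes_j A_j^{(n)}$ forces boundedness of the factors $(A_1^{(n)},\ldots,A_d^{(n)})$, which requires first normalizing the factors (e.g.\ to unit Frobenius norm in all but one slot) before extracting a convergent subsequence. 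If you add that normalization, your homomorphism picture and the paper's closedness argument combine to give the embedded statement, and your computation of $d\Phi$ at the identity is a nice bonus that the paper does not record.
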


\begin{proof}
(a) To prove the first statement, take $A,B \in \mathcal{L}(\mathbb R^{N \times N}).$ Then
there exist matrices $A_i,B_i \in \R^{n_i \times n_i}$ for $1 \le i \le d$
be such that
$$
A=\sum_{i=1}^d \mathrm{id}_{n_1} \otimes \dots \otimes \mathrm{id}_{n_{i-1}} \otimes A_{i} \otimes \mathrm{id}_{n_{i+1}} \otimes \dots \otimes \mathrm{id}_{n_d},
$$
and
$$
B= \sum_{i=1}^d \mathrm{id}_{n_1} \otimes \dots \otimes \mathrm{id}_{n_{i-1}} \otimes B_{i} \otimes \mathrm{id}_{n_{i+1}} \otimes \dots \otimes \mathrm{id}_{n_d}.
$$

Observe, that for $i < j$
$$
\left(\mathrm{id}_{n_1} \otimes \dots \otimes \mathrm{id}_{n_{i-1}} \otimes A_{i} \otimes \mathrm{id}_{n_{i+1}} \otimes \dots \otimes \mathrm{id}_{n_d}\right) \left(\mathrm{id}_{n_1} \otimes \dots \otimes \mathrm{id}_{n_{j-1}} \otimes B_{j} \otimes \mathrm{id}_{n_{j+1}} \otimes \dots \otimes \mathrm{id}_{n_d}\right)
$$
and
$$
\left(\mathrm{id}_{n_1} \otimes \dots \otimes \mathrm{id}_{n_{j-1}} \otimes B_{j} \otimes \mathrm{id}_{n_{j+1}} \otimes \dots \otimes \mathrm{id}_{n_d}\right)\left(\mathrm{id}_{n_1} \otimes \dots \otimes \mathrm{id}_{n_{i-1}} \otimes A_{i} \otimes \mathrm{id}_{n_{i+1}} \otimes \dots \otimes \mathrm{id}_{n_d}\right)
$$
both products are equal to
$$
\mathrm{id}_{n_1} \otimes \dots \otimes \mathrm{id}_{n_{i-1}} \otimes A_{i} \otimes \mathrm{id}_{n_{i+1}} \otimes \cdots \otimes \mathrm{id}_{n_{j-1}} \otimes B_{j} \otimes \mathrm{id}_{n_{j+1}} \otimes \dots \mathrm{id}_{n_d}. 
$$
A similar expression is obtained for $i > j.$  Thus, 
$$
\left[\mathrm{id}_{n_1} \otimes \dots \otimes \mathrm{id}_{n_{i-1}} \otimes A_{i} \otimes \mathrm{id}_{n_{i+1}} \otimes \dots \otimes \mathrm{id}_{n_d},\mathrm{id}_{n_1} \otimes \dots \otimes \mathrm{id}_{n_{j-1}} \otimes B_{j} \otimes \mathrm{id}_{n_{j+1}} \otimes \dots \otimes \mathrm{id}_{n_d}\right] = 0
$$
for all $i \neq j.$
\medskip

On the other hand, for $i=j$ we have
$$
\left(\mathrm{id}_{n_1} \otimes \dots \otimes \mathrm{id}_{n_{i-1}} \otimes A_{i} \otimes \mathrm{id}_{n_{i+1}} \otimes \dots \otimes \mathrm{id}_{n_d}\right) \left(\mathrm{id}_{n_1} \otimes \dots \otimes \mathrm{id}_{n_{i-1}} \otimes B_{i} \otimes \mathrm{id}_{n_{i+1}} \otimes \dots \otimes \mathrm{id}_{n_d}\right)
$$
is equal to
$$
\mathrm{id}_{n_1} \otimes \dots \otimes \mathrm{id}_{n_{i-1}} \otimes A_{i}B_{i} \otimes \mathrm{id}_{n_{i+1}} \otimes \dots \otimes \mathrm{id}_{n_d}
$$
and
$$
\left(\mathrm{id}_{n_1} \otimes \dots \otimes \mathrm{id}_{n_{i-1}} \otimes B_{i} \otimes \mathrm{id}_{n_{i+1}} \otimes \dots \otimes \mathrm{id}_{n_d}\right) \left(\mathrm{id}_{n_1} \otimes \dots \otimes \mathrm{id}_{n_{i-1}} \otimes A_{i} \otimes \mathrm{id}_{n_{i+1}} \otimes \dots \otimes \mathrm{id}_{n_d}\right)
$$
is equal to
$$
\mathrm{id}_{n_1} \otimes \dots \otimes \mathrm{id}_{n_{i-1}} \otimes B_{i}A_{i} \otimes \mathrm{id}_{n_{i+1}} \otimes \dots \otimes \mathrm{id}_{n_d}.
$$
Thus,
$$
\left[\mathrm{id}_{n_1} \otimes \dots \otimes \mathrm{id}_{n_{i-1}} \otimes A_{i} \otimes \mathrm{id}_{n_{i+1}} \otimes \dots \otimes \mathrm{id}_{n_d},\mathrm{id}_{n_1} \otimes \dots \otimes \mathrm{id}_{n_{i-1}} \otimes B_{i} \otimes \mathrm{id}_{n_{i+1}} \otimes \dots \otimes \mathrm{id}_{n_d}\right]
$$
is equal to
$$
\mathrm{id}_{n_1} \otimes \dots \otimes \mathrm{id}_{n_{i-1}} \otimes (A_iB_i - B_{i}A_{i}) \otimes \mathrm{id}_{n_{i+1}} \otimes \dots \otimes \mathrm{id}_{n_d},
$$
that is,
$$
\mathrm{id}_{n_1} \otimes \dots \otimes \mathrm{id}_{n_{i-1}} \otimes [A_i,B_i] \otimes \mathrm{id}_{n_{i+1}} \otimes \dots \otimes \mathrm{id}_{n_d}.
$$
Here $[A_i,B_i]$ is the Lie crochet in $\mathbb{R}^{n_i \times n_i}.$

\medskip

In consequence, from all said above, we conclude
\begin{align*}
[A,B] & = \sum_{i=1}^d\sum_{j=1}^d\left[ \mathrm{id}_{n_1} \otimes \dots \otimes \mathrm{id}_{n_{i-1}} \otimes A_{i} \otimes \mathrm{id}_{n_{i+1}} \otimes \dots \otimes \mathrm{id}_{n_d},\mathrm{id}_{n_1} \otimes \dots \otimes \mathrm{id}_{n_{i-1}} \otimes B_{j} \otimes \mathrm{id}_{n_{i+1}} \otimes \dots \otimes \mathrm{id}_{n_d}\right] \\
& = \sum_{i=1}^d\mathrm{id}_{n_1} \otimes \dots \otimes \mathrm{id}_{n_{i-1}} \otimes [A_i,B_i] \otimes \mathrm{id}_{n_{i+1}} \otimes \dots \otimes \mathrm{id}_{n_d} \in \mathcal{L}\left(\mathbb R^{N \times N}\right).
\end{align*}
This proves that $\mathcal{L}(\mathbb R^{N \times N})$ is a 
Lie sub-algebra of $\mathbb R^{N \times N}.$ 
\medskip

(b) It is not difficult to see that $\mathfrak{L}(\mathbb{R}^{N\times N})$ is a subgroup 
of $GL(\mathbb R^{N}).$ From Theorem~19.18 in \cite{Gallier2020}, 
to prove that $\mathfrak{L}(\mathbb{R}^{N\times N})$ is a 
Lie subgroup of $GL(\mathbb{R}^N)$ we only need to show that $\mathfrak{L}(\mathbb{R}^{N\times N})$ 
is a closed set in $GL(\mathbb{R}^N).$ This follows from the fact that the map
$$
\Phi:GL(\mathbb{R}^{n_1}) \times \cdots \times GL(\mathbb{R}^{n_d}) \longrightarrow
GL(\mathbb{R}^N) \quad (A_1,\cdots,A_d) \mapsto \bigotimes_{i=1}^d A_i
$$
is continuous. Assume that there exists a sequence, $\{A_n\}_{n \in \mathbb{N}} \subset \mathfrak{L}(\mathbb{R}^{N\times N})$ convergent to $A \in GL(\mathbb{R}^n).$ Then the sequence
$\{A_n\}_{n \in \mathbb{N}}$ is bounded. Since there exists a sequence
$\{(A_1^{(n)},\ldots,A_d^{(n)})\}_{n \in \mathbb{N}} \subset GL(\mathbb{R}^{n_1}) \times \cdots \times GL(\mathbb{R}^{n_d})$ such that $A_n = \bigotimes_{j=1}^d A_j^{(n)},$ the sequence
$\{(A_1^{(n)},\ldots,A_d^{(n)})\}_{n \in \mathbb{N}}$ is also bounded. Thus, there exists a convergent sub-sequence, also denoted by $\{(A_1^{(n)},\ldots,A_d^{(n)})\}_{n \in \mathbb{N}},$ to $(A_1,\ldots,A_d) \in GL(\mathbb{R}^{n_1}) \times \cdots \times GL(\mathbb{R}^{n_d}).$
The continuity of $\Phi,$ implies that $A =  \bigotimes_{i=1}^d A_i.$ Thus $\mathfrak{L}(\mathbb{R}^{N\times N})$ is closed in $GL(\mathbb{R}^N),$ and hence a Lie subgroup.

(c) From Lemma 4.169(b)\cite{graham}, the following equality
$$
\exp\left( \sum_{i=1}^d \mathrm{id}_{n_1} \otimes \dots \otimes \mathrm{id}_{n_{i-1}} \otimes A_{i} \otimes \mathrm{id}_{n_{i+1}} \otimes \dots \otimes \mathrm{id}_{n_d}\right) = \bigotimes_{i=1}^d \exp(A_i)
$$
holds. Thus, the exponential map is well defined. This ends the proof of the proposition.
\end{proof}

We conclude this section describing in a more detail the structure of matrices $A\in \R^{N \times N}$ for which there exists $A_i \in \R^{n_i \times n_i}$ for $1 \le i \le d$ such that
$$
A = \sum_{i=1}^d \id_{n_1} \otimes \dots \id_{n_{i-1}} \otimes A_i \otimes \id_{n_{i+1}} \otimes \dots \otimes \id_{n_d}.
$$

For dealing easily with Laplacian-like matrices, we introduce the following notation. For each $1 < i \le d$ consider the integer number
$n_1n_2\cdots n_{i-1}.$ Then, we will denote by
$$
\left(
\begin{array}{cccc}
\star & \star & \cdots &\star \\ 
\star & \star & \cdots &\star \\ 
\vdots & \vdots & \ddots & \vdots \\ 
\star & \star & \cdots & \star
\end{array}
\right)_{n_1n_2\cdots n_{i-1} \times n_1n_2\cdots n_{i-1}}
$$
a block square matrix composed by $n_1n_2\cdots n_{i-1} \times n_1n_2\cdots n_{i-1}$-blocks.
Then, we observe, that for $1 < i < d,$ we can write
$$
\id_{n_1} \otimes \dots \id_{n_{i-1}} \otimes A_i \otimes \id_{n_{i+1}} \otimes \dots \otimes \id_{n_d} =\id_{n_1\cdots n_{i-1}} \otimes A_i \otimes \id_{n_{i+1}\cdots n_d}.
$$

Since
$$
A_i \otimes \id_{n_{i+1}\cdots n_d} =
 \begin{pmatrix}
    (A_i)_{1,1}\id_{n_{i+1}\cdots n_d} \phantom{a}& (A_i)_{1,2}\id_{n_{i+1}\cdots n_d} \phantom{a}& \dots \phantom{a}& (A_i)_{1,n_i}\id_{n_{i+1}\cdots n_d} \\
    (A_i)_{2,1}\id_{n_{i+1}\cdots n_d} \phantom{a}& (A_i)_{2,2}\id_{n_{i+1}\cdots n_d} \phantom{a}& \dots \phantom{a}& (A_i)_{2,n_i}\id_{n_{i+1}\cdots n_d} \\
    \vdots \phantom{a}& \vdots \phantom{a}& \ddots \phantom{a}& \vdots \\
    (A_i)_{n_i,1}\id_{n_{i+1}\cdots n_d} \phantom{a}& (A_i)_{n_i,2}\id_{n_{i+1}\cdots n_d} \phantom{a}& \dots \phantom{a}& (A_i)_{n_i,n_i}\id_{n_{i+1}\cdots n_d} \\
    \end{pmatrix},
$$
\medskip
\noindent 
then 
$$
\phantom{a}\id_{n_1\cdots n_{i-1}} \otimes A_i \otimes \id_{n_{i+1}\cdots n_d}
= \begin{pmatrix}
A_i \otimes \id_{n_{i+1}\cdots n_d} \phantom{a}& O_i \otimes \id_{n_{i+1}\cdots n_d} \phantom{a}& \cdots \phantom{a}&O_i \otimes \id_{n_{i+1}\cdots n_d} \\ 
O_i \otimes \id_{n_{i+1}\cdots n_d} \phantom{a}& A_i \otimes \id_{n_{i+1}\cdots n_d}\phantom{a}& \cdots \phantom{a}&O_i \otimes \id_{n_{i+1}\cdots n_d}\\ 
\vdots \phantom{a}& \vdots \phantom{a}& \ddots \phantom{a}& \vdots \\ 
O_i \otimes \id_{n_{i+1}\cdots n_d} \phantom{a}& O_i \otimes \id_{n_{i+1}\cdots n_d} \phantom{a}& \cdots \phantom{a}&A_i \otimes \id_{n_{i+1}\cdots n_d}
\end{pmatrix}_{n_1n_2\cdots n_{i-1} \times n_1n_2\cdots n_{i-1}},
$$
\medskip
\noindent 
where $O_i$ denotes the zero matrix in $\R^{n_i \times n_i}$ for $1 \le i \le d.$ 
To conclude, we have the following cases
$$
A_1 \otimes \id_{n_{2}\cdots n_d} =
\begin{pmatrix}
    (A_1)_{1,1}\id_{n_{2}\cdots n_d} \phantom{a}& (A_1)_{1,2}\id_{n_{2}\cdots n_d} \phantom{a}& \dots & (A_1)_{1,n_1}\id_{n_{2}\cdots n_d} \\
    (A_1)_{2,1}\id_{n_{2}\cdots n_d} \phantom{a}& (A_1)_{2,2}\id_{n_{2}\cdots n_d} \phantom{a}& \dots \phantom{a}& (A_1)_{2,n_1}\id_{n_{2}\cdots n_d} \\
    \vdots \phantom{a}& \vdots \phantom{a}& \ddots \phantom{a} & \vdots \\
    (A_1)_{n_1,1}\id_{n_{2}\cdots n_d} \phantom{a}& (A_1)_{n_1,2}\id_{n_{2}\cdots n_d} \phantom{a}& \dots \phantom{a}& (A_1)_{n_1,n_1}\id_{n_{2}\cdots n_d} \\
    \end{pmatrix}
$$\medskip
\noindent and
$$
\id_{n_1\cdots n_{d-1}} \otimes A_d =
\begin{pmatrix}
A_d  & O_d  & \cdots &O_d  \\ 
O_d  & A_d & \cdots &O_d \\ 
\vdots & \vdots & \ddots & \vdots \\ 
O_d  & O_d  & \cdots &A_d 
\end{pmatrix}_{n_1n_2\cdots n_{d-1} \times n_1n_2\cdots n_{d-1}}.
$$
\medskip

We wish to point out that the above operations are widely used in quantum computing.

\section{A decomposition of the linear space of Laplacian-like matrices}\label{Lap_results}

We start by introducing some definitions and preliminary results needed to give 
an interesting decomposition of the linear space of Laplacian-like matrices. The next lemma lets us show how is the decomposition of $\R^{N \times N}$ as a direct sum of $\spn\{\id_{N}\}$ and its orthogonal space.\medskip

\begin{lemma}\label{decomposition}
Consider $\left(\R^{N \times N}, \|\cdot\|_F\right)$ as a Hilbert space. Then there exists a decomposition
$$
\R^{N \times N} = \spn\{\id_{N}\} \oplus \spn\{\id_{N}\}^{\bot},
$$  
where $\spn\{\id_{N}\}^{\bot} = \{ A \in \R^{N \times N}: \tr(A) = 0\}.$
Moreover, the orthogonal projection from $ \R^{N \times N}$
on $\spn\{\id_{N}\}$ is given by
$$
P_{\spn\{\id_{N}\}}(A) =  \frac{\tr(A)}{N} \, \id_{N},
$$ 
and hence for each $A \in \R^{N \times N}$ we have the following decomposition,
$$
A= \frac{\tr(A)}{N} \, \id_{N} + \left( A - \frac{\tr(A)}{N} \id_{N}\right),
$$
where $\left( A - \frac{\tr(A)}{N} \id_{N}\right) \in  \spn\{\id_{N}\}^{\bot}.$
\end{lemma}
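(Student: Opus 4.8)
The plan is to treat this as a direct application of the orthogonal projection theorem in a finite-dimensional Hilbert space, followed by two short trace computations. First I would observe that $\R^{N \times N}$ equipped with the Fröbenius inner product $\langle A,B\rangle = \tr(A^{\top}B)$ is a finite-dimensional, hence complete, inner product space, and that $\spn\{\id_{N}\}$ is one-dimensional and therefore a closed subspace. The projection theorem then guarantees the orthogonal direct sum $\R^{N \times N} = \spn\{\id_{N}\} \oplus \spn\{\id_{N}\}^{\bot}$, so no further work is needed to establish the existence of the splitting itself.

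Next I would identify the orthogonal complement explicitly. A matrix $A$ is orthogonal to every multiple $c\,\id_{N}$ precisely when $\langle A, \id_{N}\rangle = 0$, and item (2) of the inner-product list gives $\langle A, \id_{N}\rangle = \tr(A)$. Hence the complement is exactly $\{A \in \R^{N \times N} : \tr(A) = 0\}$, as claimed. The small point deserving care here is to argue the equality of sets through the equivalence ``orthogonal to the whole line iff orthogonal to the generator,'' rather than merely checking one inclusion.

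For the projection formula I would invoke the standard rank-one projection onto the line spanned by a single nonzero element,
$$
P_{\spn\{\id_{N}\}}(A) = \frac{\langle A, \id_{N}\rangle}{\langle \id_{N},\id_{N}\rangle}\,\id_{N},
$$
and then substitute $\langle A,\id_{N}\rangle = \tr(A)$ and $\langle \id_{N},\id_{N}\rangle = \|\id_{N}\|_F^2 = N$ from items (2) and (3), which yields $P_{\spn\{\id_{N}\}}(A) = \frac{\tr(A)}{N}\,\id_{N}$. To confirm the displayed decomposition, I would verify directly that the residual lies in the complement by computing its trace: $\tr\!\left(A - \frac{\tr(A)}{N}\id_{N}\right) = \tr(A) - \frac{\tr(A)}{N}\tr(\id_{N}) = \tr(A) - \tr(A) = 0$.

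Since every ingredient is either the abstract projection theorem or a one-line trace identity, I do not expect any genuine obstacle; the result is essentially a textbook consequence of the Hilbert space structure already introduced, and the only mild subtlety is the set-equality argument for the complement noted above.
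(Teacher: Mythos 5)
Your proposal is correct and follows essentially the same route as the paper, which simply invokes the rank-one projection formula $P_{\spn\{\id_N\}}(A) = \frac{\langle \id_N, A\rangle}{\|\id_N\|_F^2}\,\id_N = \frac{\tr(A)}{N}\,\id_N$; you merely spell out the supporting details (completeness, the identification of the complement as the trace-zero matrices, and the trace check on the residual) that the paper leaves implicit.
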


\begin{proof}

The lemma follows from the fact that
$$
P_{\id_N}(A) = \frac{\langle \id_N,A\rangle_{\R^{N\times N}}}{\|\id_N\|_F^2} \, \id_n  = \frac{\tr(A)}{N} \, \id_{N},
$$
is the orthogonal projection onto $\spn\{\id_{N}\}.$
\end{proof}

Now, we consider the matrix space $\mathbb{R}^{N \times N}$ where $N=n_1\cdots n_d,$ and hence
$\mathbb{R}^{N \times N} = \bigotimes_{i=1}^d \R^{n_i \times n_i}$ can be considered as a tensor space. 
Then, for rank-one tensors $A=A_1 \otimes \dots \otimes A_d$ and $B=B_1 \otimes \dots \otimes B_d$ where $A_i,B_i \in \R^{n_i \times n_i},$
we have
\begin{align*}
\langle A, B \rangle_{\R^{N \times N}} & = \langle A_1 \otimes \dots \otimes A_d, B_1 \otimes \dots \otimes B_d \rangle_{\R^{N \times N}} = \tr((A_1 \otimes \dots \otimes A_d)^T (B_1 \otimes \dots \otimes B_d)) \\ 
& =  \tr((A_1^T \otimes \dots \otimes A_d^T) (B_1 \otimes \dots \otimes B_d)) =  \tr(A_1^TB_1 \otimes \dots \otimes A_d^TB_d) \\
& = \prod_{i=1}^d \tr(A_i^{\top}B_i) = \prod_{i=1}^d \langle A_i,B_i\rangle_{\R^{n_i\times n_i}} .
\end{align*}
Thus, the inner product $\langle \cdot, \cdot \rangle_{\mathbb R^{N \times N}}$ satisfies
\begin{equation}\label{eq1}
\langle \id_{[n_i]} \otimes A_i, \id_{[n_i]} \otimes B_i \rangle_{\R^{N \times N}} = \tr(A_i^{\top}B_i) \prod_{\substack{j=1\\j \neq i}}^d n_j,
\end{equation}
\medskip
and $\|A\|_F= \sqrt{\langle A, A \rangle_{\R^{N \times N}}},$ is called a tensor-norm.

%

The next result gives a first characterization of the 
linear space $\mathcal L\left(\mathbb R^{N \times N}\right).$ 

\medskip

\begin{theorem}\label{caract1Lap} Let $\R^{N \times N},$ where $N=n_1\cdots n_d.$ Then
\begin{equation}
\mathcal{L}\left(\mathbb{R}^{N \times N}\right)= \mathrm{span}\, \{ \id_{N}\} \oplus \Delta,
\end{equation}
where $\Delta = (\spn\{ \id_{N}\}^{\bot} \cap \mathcal{L}(\R^{N \times N})).$ Furthermore,
$\mathcal{L}(\R^{N \times N})^{\bot}$ is a subspace of $\spn\{ \id_{N}\}^{\bot}.$
\end{theorem}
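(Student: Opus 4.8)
The plan is to first establish the single structural fact that makes the whole statement work, namely the containment $\spn\{\id_{N}\} \subseteq \mathcal{L}(\R^{N \times N})$; once this is in hand, the decomposition is nothing more than the restriction of Lemma~\ref{decomposition} to the subspace $\mathcal{L}(\R^{N \times N})$. To verify the containment I would choose $A_i = \tfrac{1}{d}\,\id_{n_i}$ for each $1 \le i \le d$ in the defining sum \eqref{laplaciana}. Since every factor $\id_{n_1} \otimes \cdots \otimes \id_{n_i} \otimes \cdots \otimes \id_{n_d}$ equals $\id_{N}$, this choice yields $\sum_{i=1}^d \id_{[n_i]} \otimes \tfrac{1}{d}\id_{n_i} = \sum_{i=1}^d \tfrac{1}{d}\,\id_{N} = \id_{N}$, so $\id_{N}$ is Laplacian-like and hence $\spn\{\id_{N}\} \subseteq \mathcal{L}(\R^{N \times N})$.

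For the direct-sum decomposition I would take an arbitrary $A \in \mathcal{L}(\R^{N \times N})$ and apply Lemma~\ref{decomposition} to write $A = \frac{\tr(A)}{N}\id_{N} + \bigl(A - \frac{\tr(A)}{N}\id_{N}\bigr)$. The first summand lies in $\spn\{\id_{N}\} \subseteq \mathcal{L}(\R^{N \times N})$; because $\mathcal{L}(\R^{N \times N})$ is a linear subspace containing $A$, the second summand $A - \frac{\tr(A)}{N}\id_{N}$ also lies in $\mathcal{L}(\R^{N \times N})$, and by Lemma~\ref{decomposition} it lies in $\spn\{\id_{N}\}^{\bot}$. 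Hence the second summand belongs to $\Delta = \spn\{\id_{N}\}^{\bot} \cap \mathcal{L}(\R^{N \times N})$, giving $A \in \spn\{\id_{N}\} + \Delta$. The reverse inclusion is immediate, since both $\spn\{\id_{N}\}$ and $\Delta$ sit inside $\mathcal{L}(\R^{N \times N})$. Directness of the sum follows from $\spn\{\id_{N}\} \cap \Delta \subseteq \spn\{\id_{N}\} \cap \spn\{\id_{N}\}^{\bot} = \{0\}$.

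For the final assertion that $\mathcal{L}(\R^{N \times N})^{\bot} \subseteq \spn\{\id_{N}\}^{\bot}$, I would simply use that passing to orthogonal complements reverses inclusions: by the first step $\spn\{\id_{N}\} \subseteq \mathcal{L}(\R^{N \times N})$, so any matrix orthogonal to all of $\mathcal{L}(\R^{N \times N})$ is in particular orthogonal to $\id_{N}$, and therefore lies in $\spn\{\id_{N}\}^{\bot}$.

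I do not anticipate a serious obstacle here; the argument is essentially abstract Hilbert-space bookkeeping resting on one observation. The only point that genuinely requires care is confirming that the complementary part $A - \frac{\tr(A)}{N}\id_{N}$ remains inside $\mathcal{L}(\R^{N \times N})$, and this is precisely where the containment $\spn\{\id_{N}\} \subseteq \mathcal{L}(\R^{N \times N})$ is used. Verifying that $\id_{N}$ is Laplacian-like is thus the crux on which the entire theorem turns.
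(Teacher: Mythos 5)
Your proposal is correct and follows essentially the same route as the paper: establish $\spn\{\id_{N}\}\subseteq\mathcal{L}(\R^{N\times N})$ and then restrict the orthogonal decomposition of Lemma~\ref{decomposition} to the subspace $\mathcal{L}(\R^{N\times N})$, with the final inclusion coming from complement reversal. Your explicit witness $A_i=\tfrac{1}{d}\id_{n_i}$ and the check that $A-\tfrac{\tr(A)}{N}\id_N$ stays in $\mathcal{L}(\R^{N\times N})$ merely spell out details the paper leaves implicit.
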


\begin{proof}
Assume that a given matrix $A \in \mathbb{R}^{N \times N}$ can be written as in \eqref{laplaciana}. 
and denote each component in the sum representation of $A,$ by $L_i=\id_{[n_i]} \otimes A_i,$ where $A_i \in \R^{n_i \times n_i}$ for $1 \le i \le d.$ Then $L_i \in \spn\{\id_{[n_i]}\} \otimes \R^{n_i\times n_i}$ for $1 \le i \le d,$ and in consequence,
$$
\sum_{i=1}^d \spn\{\id_{[n_i]}\} \otimes \R^{n_i\times n_i} = \mathcal{L}\left(\mathbb{R}^{N \times N}\right).
$$\medskip

Thus, $\spn\{ \id_{N}\} \subset  \mathcal{L}\left(\mathbb{R}^{N \times N}\right),$ and, by Lemma~\ref{decomposition}, we have the following decomposition
\begin{equation}\label{eq2thdelta}
\mathcal{L}\left(\mathbb{R}^{N \times N}\right) = \Delta \oplus \spn\{\id_N\}.
\end{equation}
where $\Delta = \left(\spn\{ \id_{N}\}^{\bot} \cap \mathcal{L}(\R^{N \times N})\right).$ The last statement is consequence of Lemma~\ref{decomposition}. This ends the theorem.
\end{proof}

Now, given any square matrix in $\R^{N \times N},$ we would like to 
project it onto $\mathcal{L}(\mathbb{R}^{N \times N})$ to obtain its Laplacian approximation. 
To compute this approximation explicitly, the following result, which is a consequence of the above theorem, will be useful.

\begin{corollary}\label{optimal_decom}
Assume $\mathbb{R}^{N \times N},$ with $N=n_1 \cdots n_d \in \N.$ Then
$$
P_{\mathcal{L}(\R^{N \times N})} = P_{\spn\{\id_N\}} + P_{\Delta},
$$
that is, for all $A \in \R^{N \times N}$ it holds
$$
P_{\mathcal{L}(\R^{N \times N})}(A) = \frac{\tr(A)}{N}\, \id_N + P_{\Delta}(A).
$$
\end{corollary}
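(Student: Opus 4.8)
The plan is to exploit the fact that the direct-sum decomposition supplied by Theorem~\ref{caract1Lap} is in fact an \emph{orthogonal} direct sum, and then to invoke the standard additivity of orthogonal projections over mutually orthogonal summands. First I would record the key observation: by definition $\Delta = \spn\{\id_N\}^{\bot} \cap \mathcal{L}(\R^{N\times N})$, so every element of $\Delta$ is orthogonal to $\id_N$. Hence the decomposition $\mathcal{L}(\R^{N\times N}) = \spn\{\id_N\} \oplus \Delta$ obtained in \eqref{eq2thdelta} is an orthogonal direct sum inside the Hilbert space $\left(\R^{N\times N}, \|\cdot\|_F\right)$, not merely an algebraic one.

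Next I would establish the elementary but central fact underlying the whole statement: if $U_1$ and $U_2$ are mutually orthogonal subspaces of a Hilbert space and $V = U_1 \oplus U_2$, then $P_V = P_{U_1} + P_{U_2}$. To prove it I would set $B = P_{U_1}(A) + P_{U_2}(A)$, note that $B \in V$, and verify that $A - B \perp V$. Testing against an arbitrary $u_1 \in U_1$ gives $\langle A - B, u_1\rangle = \langle A - P_{U_1}(A), u_1\rangle - \langle P_{U_2}(A), u_1\rangle$, where the first term vanishes because $A - P_{U_1}(A) \perp U_1$, and the second vanishes because $P_{U_2}(A) \in U_2 \perp U_1$. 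The symmetric computation against $u_2 \in U_2$ yields the same conclusion, so $A - B$ is orthogonal to all of $V$; since $B \in V$, the variational characterization of the orthogonal projection forces $P_V(A) = B$.

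Applying this with $U_1 = \spn\{\id_N\}$, $U_2 = \Delta$, and $V = \mathcal{L}(\R^{N\times N})$ immediately yields $P_{\mathcal{L}(\R^{N\times N})} = P_{\spn\{\id_N\}} + P_{\Delta}$. Finally I would substitute the explicit formula $P_{\spn\{\id_N\}}(A) = \frac{\tr(A)}{N}\,\id_N$ furnished by Lemma~\ref{decomposition} to recover the stated closed form.

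I do not anticipate a serious obstacle: the result is essentially a corollary of the orthogonality of the decomposition, and the only point requiring genuine care is confirming that $\spn\{\id_N\}$ and $\Delta$ are truly orthogonal, which is immediate from the way $\Delta$ is defined as an intersection with $\spn\{\id_N\}^{\bot}$. The additivity step, though routine, is the substantive ingredient and is worth isolating explicitly, since it is precisely the property that fails for a non-orthogonal direct sum.
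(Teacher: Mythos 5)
Your proof is correct and follows exactly the route the paper intends: the paper states this corollary without an explicit proof, presenting it as an immediate consequence of Theorem~\ref{caract1Lap}, and the content you supply --- that $\Delta \subset \spn\{\id_N\}^{\bot}$ makes the direct sum orthogonal, that orthogonal projections add over mutually orthogonal summands, and that Lemma~\ref{decomposition} gives the closed form $\frac{\tr(A)}{N}\,\id_N$ --- is precisely the implicit argument, carefully spelled out. No gaps.
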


Next, we need to characterize $\Delta$ in order to explicitly construct the orthogonal projection
onto $\mathcal L(\mathbb R^{N \times N})$. From the proof of the Theorem~\ref{caract1Lap} we see that the linear subspaces given
by
$$
 \spn\{ \id_{N}\}^{\bot} \cap  \spn\{ \id_{[n_i]}\} \otimes \R^{n_i\times n_i},
$$
for $1 \le i \le d,$ are of interest to characterize $\Delta$ as the next result shows.

\begin{theorem}\label{thdelta}
Let $\mathbb{R}^{N \times N}$ with $N=n_1 \cdots n_d \in \N,$ and let $\spn\{ \id_{N} \}^{\perp_i}$ be the orthogonal complement of $\spn\{ \id_{N} \}$ in the linear subspace $\spn\{\id_{[n_i]}\} \otimes \R^{n_i \times n_i}$ for $1 \leq i \leq d$. Then,
\begin{equation}\label{eq1thdelta}
\Delta=\bigoplus_{i=1}^d  \spn\{ \id_{N} \}^{\perp_i}.
\end{equation}
Furthermore, a matrix $A$ belongs to $\Delta$ if and only if it has the form
$$
A=\sum_{i=1}^d \id_{n_1} \otimes \dots \id_{n_{i-1}} \otimes A_i \otimes \id_{n_{i+1}} \otimes \dots \otimes \id_{n_d}, \quad \text{with} \quad \tr(A_i)=0, \; i=1,\dots,d.
$$
\end{theorem}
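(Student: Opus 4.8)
The plan is to give a concrete description of each space $\spn\{\id_N\}^{\perp_i}$, assemble the orthogonal direct sum, and then read off the trace characterization. Everything rests on the multiplicativity of the inner product over tensor factors recorded in \eqref{eq1}. First I would identify $\spn\{\id_N\}^{\perp_i}$ explicitly. Since $\id_N=\id_{[n_i]}\otimes\id_{n_i}$ lies in $\spn\{\id_{[n_i]}\}\otimes\R^{n_i\times n_i}$, a matrix $\id_{[n_i]}\otimes A_i$ of that subspace is orthogonal to $\id_N$ precisely when
$$
\langle \id_{[n_i]}\otimes A_i,\ \id_{[n_i]}\otimes\id_{n_i}\rangle_{\R^{N\times N}}=\tr(A_i)\prod_{j\neq i}n_j=0,
$$
by \eqref{eq1}. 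Hence $\spn\{\id_N\}^{\perp_i}=\{\id_{[n_i]}\otimes A_i:\tr(A_i)=0\}$. Two consequences are immediate: each such matrix is a single Laplacian term, so it lies in $\mathcal{L}(\R^{N\times N})$, and it has zero trace, so $\spn\{\id_N\}^{\perp_i}\subseteq\Delta$ for every $i$.

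Second, I would prove the mutual orthogonality of these subspaces. For $i\neq j$ and traceless $A_i,B_j$, expanding $\langle \id_{[n_i]}\otimes A_i,\ \id_{[n_j]}\otimes B_j\rangle_{\R^{N\times N}}$ factor by factor gives $\tr(A_i)\tr(B_j)\prod_{k\neq i,j}n_k$, which vanishes. Therefore $\sum_{i=1}^d\spn\{\id_N\}^{\perp_i}$ is an orthogonal, and in particular direct, sum, and by the previous paragraph it is contained in $\Delta$. This yields one inclusion in \eqref{eq1thdelta}.

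For the reverse inclusion I would take $A\in\Delta$ and write $A=\sum_i\id_{[n_i]}\otimes A_i$ (possible since $A\in\mathcal{L}(\R^{N\times N})$), then split each block as $A_i=\hat A_i+\tfrac{\tr(A_i)}{n_i}\id_{n_i}$ with $\hat A_i$ traceless. This produces $A=\sum_i\id_{[n_i]}\otimes\hat A_i+c\,\id_N$ with $c=\sum_i\tr(A_i)/n_i$. Taking traces and using $\tr(A)=0$ together with the tracelessness of the $\hat A_i$-terms forces $cN=0$, hence $c=0$; thus $A\in\bigoplus_i\spn\{\id_N\}^{\perp_i}$, establishing \eqref{eq1thdelta}. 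The \emph{furthermore} characterization then follows at once: by the description of each $\spn\{\id_N\}^{\perp_i}$, membership in $\Delta$ is exactly representability as $\sum_i\id_{[n_i]}\otimes A_i$ with $\tr(A_i)=0$ for all $i$.

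The main obstacle I anticipate is the non-uniqueness of the Laplacian representation: a traceless Laplacian matrix need not have traceless blocks $A_i$ in a given representation, and one must show it can be re-expressed with all blocks traceless without leaving $\mathcal{L}(\R^{N\times N})$. The trace-balancing computation above, forcing the leftover multiple of $\id_N$ to vanish, is the decisive step; the orthogonality computation, while routine via \eqref{eq1}, is what guarantees the sum is genuinely direct rather than merely spanning.
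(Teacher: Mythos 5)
Your proposal is correct and follows essentially the same route as the paper: the same identification of $\spn\{\id_N\}^{\perp_i}$ as the traceless blocks via \eqref{eq1}, the same pairwise-orthogonality computation, and the same splitting $A_i=\hat A_i+\tfrac{\tr(A_i)}{n_i}\id_{n_i}$ for the reverse inclusion. The only (cosmetic) difference is at the very end: the paper shows $\mathcal{L}(\R^{N\times N})\subset\spn\{\id_N\}\oplus\Delta'$ and invokes the direct-sum decomposition of Theorem~\ref{caract1Lap} to conclude $\Delta'=\Delta$, whereas you kill the residual multiple of $\id_N$ directly by a trace computation on $A\in\Delta$; both are valid.
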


\begin{proof}
First, we take into account that $\spn\{ \id_{[n_i]}\} \otimes \R^{n_i\times n_i}$ a linear subspace of $\mathcal{L}(\mathbb{R}^{N \times N})$ linearly isomorphic to the
matrix space $\R^{n_i\times n_i}.$ Thus, motivated by Lemma~\ref{decomposition} applied on $\mathbb{R}^{n_i \times n_i}$, we write
\begin{align*}
\spn\{ \id_{[n_i]}\} \otimes \R^{n_i\times n_i} & = \spn\{ \id_{[n_i]}\} \oplus (\spn\{\id_{n_i}\} \otimes \spn\{\id_{n_i}\}^{\bot}) \\ 
& = (\spn\{ \id_{[n_i]}\} \otimes \spn\{\id_{n_i}\}) \oplus (\spn\{ \id_{[n_i]}\} \otimes \spn\{\id_{n_i}\}^{\bot}) \\
& = \spn\{ \id_N\}  \oplus \spn\{ \id_N\} ^{\perp_i},
\end{align*}
Since $\spn\{ \id_N\} ^{\perp_i}= \spn\{ \id_{[n_i]}\} \otimes \spn\{\id_{n_i}\}^{\bot},$ we claim that it is the orthogonal complement of the linear subspace generated by the identity matrix $\id_N = \id_{[n_i]} \otimes \id_{n_i}$ in the linear subspace $\id_{[n_i]} \otimes \R^{n_i\times n_i}.$ To prove the claim, observe that for $\id_{[n_i]} \otimes A_i \in \spn\{ \id_N\} ^{\perp_i}$ $(1 \le i \le d),$ by using \eqref{eq1}, it holds
$$
\langle \id_{[n_i]} \otimes A_i, \id_{N} \rangle_{\R^{N \times N}} = \langle \id_{[n_i]} \otimes A_i, \id_{[n_i]} \otimes \id_{n_i} \rangle_{\R^{N \times N}} = \tr(A_i) \prod_{\substack{j=1\\j \neq i}}^d n_j = 0,
$$ 
because $A_i \in \spn\{\id_{n_i}\}^{\bot}$ and hence $\tr(A_i)=0,$ for $1 \le i \le d.$
Thus, the claim follows and 
\begin{equation*}
\begin{matrix}
\begin{aligned}
   \spn \{\id_N\}^{\perp_i}& = \{\id_{[n_i]} \otimes A_i \in \spn\{\id_{[n_i]}\} \otimes \R^{n_i \times n_i}\, :\, \tr(A_i)=0\} \\ 
  &  = \left\{\id_{[n_i]} \otimes A_i \in \spn\{ \id_{[n_i]}\} \otimes \R^{n_i\times n_i}:
\langle \id_{[n_i]} \otimes A_i, \id_{N} \rangle_{\R^{N \times N}} = 0
\right\}.
\end{aligned}
\end{matrix}
\end{equation*}
\medskip

To prove \eqref{eq1thdelta}, we first consider $1 \le i < j \le d,$ and take $\id_{[n_k]} \otimes A_k \in \spn\{ \id_N\} ^{\perp_k}$  for $k=i,j.$ Then the inner product satisfies
\begin{align*}
\langle \id_{[n_i]} \otimes A_i,\id_{[n_j]} \otimes A_j \rangle_{\R^{N \times N}} & = \tr\left(
(\id_{[n_i]} \otimes A_i)^T(\id_{[n_j]} \otimes A_j)
\right) = \tr\left(
(\id_{[n_i]} \otimes A_i^T)(\id_{[n_j]} \otimes A_j)
\right)\\
& = \tr\left(\id_{n_1} \otimes \id_{n_{i-1}}\otimes A_i^T \otimes \id_{n_{i+1}} \otimes \cdots \otimes \id_{n_{j-1}} \otimes A_j \otimes \id_{n_{j+1}} \otimes \cdots \otimes \id_{n_d} \right)\\
 & = \prod_{\stackrel{\ell=1}{\ell \neq i,j}}^d \langle \id_{\ell},\id_{\ell}\rangle_{\R^{n_{\ell}\times n_{\ell}}} \tr(A_i) \tr(A_j) =0,
\end{align*}
because $ \tr(A_i) = \tr(A_j)=0.$ The same equality holds for $j < i.$ Thus, we conclude that $\spn\{ \id_N\} ^{\perp_i}$ is orthogonal to $\spn\{ \id_N\} ^{\perp_j}$
for all $i\neq j.$ So, the subspace 
$$
\Delta^{\prime} = \bigoplus_{i=1}^d \spn \{\id_N\}^{\perp_i},
$$ 
is well defined and it is a subspace of $\mathcal{L}(\mathbb{R}^{N \times N}).$ 
\medskip

To conclude the proof \eqref{eq1thdelta}, we will show that $\Delta' = \Delta.$ Since, for each $1 \le i \le d,$ $\spn \{\id_N\}^{\perp_i}$ is orthogonal to $ \spn \{\id_N\}$ we have 
$$
 \spn \{\id_N\} \oplus \Delta^{\prime} \subset \mathcal{L}(\mathbb{R}^{N \times N}).
$$
To obtain the equality, take $A \in \mathcal{L}(\mathbb{R}^{N \times N}).$ Then there exists $A_i \in \R^{n_i \times n_i}$
for $1 \le i \le d$ be such that
$$
A = \sum_{i=1}^d \mathrm{id}_{[n_i]} \otimes A_{i}.
$$
From Lemma~\ref{decomposition} we can write
$$
A_i = \frac{\tr(A_i)}{n_i}\, \id_{n_i} + \left( A_i  - \frac{\tr(A_i)}{n_i}\, \id_{n_i}\right)
$$
for each $1 \le i \le d.$ Then, 
\begin{align*}
A & = \sum_{i=1}^d \mathrm{id}_{[n_i]} \otimes \left( \frac{\tr(A_i)}{n_i}\, \id_{n_i} + \left( A_i  - \frac{\tr(A_i)}{n_i}\, \id_{n_i}\right)\right) \\ 
& = \sum_{i=1}^d \mathrm{id}_{[n_i]} \otimes \frac{\tr(A_i)}{n_i}\, \id_{n_i} +
\sum_{i=1}^d \mathrm{id}_{[n_i]} \otimes  \left( A_i  - \frac{\tr(A_i)}{n_i}\, \id_{n_i}\right) \\ 
& = \sum_{i=1}^d  \frac{\tr(A_i)}{n_i}\, \mathrm{id}_{[n_i]} \otimes \id_{n_i} +
\sum_{i=1}^d \mathrm{id}_{[n_i]} \otimes  \left( A_i  - \frac{\tr(A_i)}{n_i}\, \id_{n_i}\right) \\ 
& =  \left(\sum_{i=1}^d \frac{\tr(A_i)}{n_i}\right)\, \id_{N} + \sum_{i=1}^d \mathrm{id}_{[n_i]} \otimes  \left( A_i  - \frac{\tr(A_i)}{n_i}\, \id_{n_i}\right).
\end{align*}
Observe that $\left(\sum_{i=1}^d \frac{\tr(A_i)}{n_i}\right)\, \id_{N}  \in \spn\{\id_N\}$ and
$$
\sum_{i=1}^d \mathrm{id}_{[n_i]} \otimes  \left( A_i  - \frac{\tr(A_i)}{n_i}\, \id_{n_i}\right) \in \Delta^{\prime}.
$$
Thus, $\mathcal{L}(\mathbb{R}^{N \times N}) \subset \spn \{\id_N\} \oplus \Delta^{\prime}.$ In consequence $\Delta^{\prime} = \Delta,$ and this proves the theorem.
\end{proof}

A direct consequence of the above theorem is the next corollary.

\begin{corollary}\label{optimal_decom1}
Assume $\mathbb{R}^{N \times N},$ with $N=n_1 \cdots n_d \in \N.$ Then
$$
P_{\mathcal{L}(\R^{N \times N})} = P_{\spn\{\id_N\}} + \sum_{i=1}^d P_{\spn\{ \id_{N} \}^{\perp_i}},
$$
that is, for all $A \in \R^{N \times N}$ it holds
$$
P_{\mathcal{L}(\R^{N \times N})}(A) = \frac{\tr(A)}{N}\, \id_N + \sum_{i=1}^d \id_{n_1} \otimes \dots \id_{n_{i-1}} \otimes A_i \otimes \id_{n_{i+1}} \otimes \dots \otimes \id_{n_d},
$$
where $A_i \in \mathbb{R}^{n_i \times n_i}$ satisfies $\tr(A_i) =0$ for $1 \le i \le d.$ 
\end{corollary}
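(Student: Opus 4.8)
The plan is to bootstrap from Corollary~\ref{optimal_decom}, which already supplies the splitting $P_{\mathcal{L}(\R^{N \times N})} = P_{\spn\{\id_N\}} + P_{\Delta}$, so that the entire task reduces to identifying $P_{\Delta}$ with $\sum_{i=1}^d P_{\spn\{\id_N\}^{\perp_i}}$. The essential input is the decomposition $\Delta = \bigoplus_{i=1}^d \spn\{\id_N\}^{\perp_i}$ furnished by Theorem~\ref{thdelta}, together with the pairwise orthogonality $\spn\{\id_N\}^{\perp_i} \perp \spn\{\id_N\}^{\perp_j}$ for $i \neq j$ that was established inside the proof of that theorem. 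Thus the corollary is essentially the statement that, for an orthogonal direct sum, the projection onto the whole splits as a sum of projections onto the pieces.

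First I would recall the general Hilbert-space principle that the orthogonal projection onto an orthogonal direct sum of closed subspaces equals the sum of the projections onto the summands, and verify it directly in this finite-dimensional setting. Fix $A \in \R^{N \times N}$ and put $B := \sum_{i=1}^d P_{\spn\{\id_N\}^{\perp_i}}(A)$, so that $B \in \Delta$ by Theorem~\ref{thdelta}. To conclude $B = P_{\Delta}(A)$ it suffices to show that $A - B$ is orthogonal to every summand: for a test matrix $V \in \spn\{\id_N\}^{\perp_j}$ the pairwise orthogonality of the $\spn\{\id_N\}^{\perp_i}$ forces all cross terms $\langle P_{\spn\{\id_N\}^{\perp_i}}(A), V\rangle$ with $i \neq j$ to vanish, leaving $\langle A - B, V\rangle = \langle A - P_{\spn\{\id_N\}^{\perp_j}}(A), V\rangle = 0$ by the defining property of the $j$-th projection. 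Hence $A - B \perp \Delta$, and since $B \in \Delta$, the characterizing variational property of the orthogonal projection yields $B = P_{\Delta}(A)$, which proves the first displayed identity.

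Next I would make the summands explicit to obtain the closed form. By Theorem~\ref{thdelta} we have $\spn\{\id_N\}^{\perp_i} = \spn\{\id_{[n_i]}\} \otimes \spn\{\id_{n_i}\}^{\bot}$, so each $P_{\spn\{\id_N\}^{\perp_i}}(A)$ is necessarily of the form $\id_{[n_i]} \otimes A_i$ for some $A_i \in \R^{n_i \times n_i}$ with $\tr(A_i) = 0$. Substituting these expressions, together with the formula $P_{\spn\{\id_N\}}(A) = \frac{\tr(A)}{N}\,\id_N$ from Lemma~\ref{decomposition}, into the identity $P_{\mathcal{L}(\R^{N \times N})} = P_{\spn\{\id_N\}} + \sum_{i=1}^d P_{\spn\{\id_N\}^{\perp_i}}$ produces exactly the claimed expression for $P_{\mathcal{L}(\R^{N \times N})}(A)$.

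I expect the only genuinely delicate point to be the direct verification that summing the individual projections actually reproduces $P_{\Delta}$; everything hinges on the mutual orthogonality of the $\spn\{\id_N\}^{\perp_i}$, which is precisely what Theorem~\ref{thdelta} provides. Once that orthogonality is in hand, the argument is a short application of the defining property of orthogonal projections, and the remaining steps amount to transcribing the explicit shapes of the summands, so no further obstacle arises.
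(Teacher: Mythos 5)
Your proof is correct and follows exactly the route the paper intends: the paper states Corollary~\ref{optimal_decom1} without proof as ``a direct consequence'' of Theorem~\ref{thdelta}, and your argument simply supplies the omitted details, namely Corollary~\ref{optimal_decom} plus the standard fact that the projection onto the orthogonal direct sum $\Delta=\bigoplus_{i=1}^d \spn\{\id_N\}^{\perp_i}$ is the sum of the projections onto the summands. The verification via $\langle A-B,V\rangle=0$ for $V\in\spn\{\id_N\}^{\perp_j}$ and the identification of each summand as $\id_{[n_i]}\otimes A_i$ with $\tr(A_i)=0$ are both sound.
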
 

\section{A Numerical Strategy to perform a Laplacian-like decomposition}\label{results}

Now, in this section we will study some numerical strategies in order to compute, for a
given matrix $A \in \R^{N \times N},$  with the help of Proposition~\ref{optimal_decom} and Theorem~\ref{thdelta}, its best Laplacian-like approximation. We start with
 the following Greedy Algorithm.

\begin{theorem}\label{thlaplacian}
    Let $A$ be a matrix in $ \R^{N \times N}$, with $N=n_1\cdots n_d,$ such that $\tr(A) = 0.$ Consider the following iterative
    procedure:
    \begin{enumerate}
	\item Take $X_k^{(0)} = 0$ for $1\le k \le d.$ 
	\item For each $\ell \ge 1$ compute for $1 \le i \le d$ the matrix $U_i^{(\ell)}$ as
	$$U_i^{(\ell)}=\arg \min_{U_i \in  \spn\{\id_{n_i}\}^{\perp}} 
		\left\| A - \sum_{k=1}^{i-1} \id_{[n_k]} \otimes X_k^{(\ell)} + \id_{[n_i]} \otimes (X_i^{(\ell-1)}+U_i) +  \sum_{k=i+1}^{d} \id_{[n_k]} \otimes X_k^{(\ell-1)} \right\|,$$
	and put $X_i^{(\ell)} = X_i^{(\ell-1)} + U_i^{(\ell)}.$
\end{enumerate}
Then
    $$
    \lim_{\ell \rightarrow \infty} \sum_{k=1}^d \id_{[n_k]} \otimes X_k^{(\ell)} = P_{\Delta}(A)
    $$
where $P_{\Delta}(A)$ is the orthogonal projection of $A$ on $\Delta=\bigoplus_{i=1}^d  \spn\{ \id_{ N} \}^{\perp_i}.$
\end{theorem}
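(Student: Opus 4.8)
The plan is to read the iteration as a cyclic block-coordinate (Gauss--Seidel) minimization of the strictly convex functional $f(L)=\|A-L\|_F^2$ over the subspace $\Delta=\bigoplus_{i=1}^d V_i$, with $V_i:=\spn\{\id_N\}^{\perp_i}$, and then to exploit the mutual orthogonality of the summands $V_i$ proved in Theorem~\ref{thdelta}. Set $L^{(\ell)}:=\sum_{k=1}^d \id_{[n_k]}\otimes X_k^{(\ell)}$. First I would observe that, since $X_i^{(\ell-1)}\in\spn\{\id_{n_i}\}^{\bot}$ and $U_i$ ranges over all of $\spn\{\id_{n_i}\}^{\bot}$, the sum $X_i^{(\ell-1)}+U_i$ also ranges over all of $\spn\{\id_{n_i}\}^{\bot}$; hence $\id_{[n_i]}\otimes(X_i^{(\ell-1)}+U_i)$ sweeps out the entire subspace $V_i=\{\id_{[n_i]}\otimes M:\tr(M)=0\}$. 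Consequently the inner minimization in step 2 is nothing but the orthogonal projection onto $V_i$ of the current residual $R_i:=A-\sum_{k<i}\id_{[n_k]}\otimes X_k^{(\ell)}-\sum_{k>i}\id_{[n_k]}\otimes X_k^{(\ell-1)}$, so that $\id_{[n_i]}\otimes X_i^{(\ell)}=P_{V_i}(R_i)$.

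The \emph{key step} is to use orthogonality to decouple the blocks. Because $V_i\perp V_k$ for every $k\neq i$ by Theorem~\ref{thdelta}, the projection $P_{V_i}$ annihilates every term $\id_{[n_k]}\otimes X_k\in V_k$ with $k\neq i$, and therefore $P_{V_i}(R_i)=P_{V_i}(A)$ irrespective of the current values of the remaining blocks. In particular, during the first sweep $\ell=1$ (where all blocks are initialized at $0$) each block is set to $\id_{[n_i]}\otimes X_i^{(1)}=P_{V_i}(A)$, and from the second sweep onward the optimal increment is $U_i^{(\ell)}=0$, since the block already equals the value returned by $P_{V_i}$. Thus the sequence is in fact constant for $\ell\ge1$: $L^{(\ell)}=\sum_{i=1}^d P_{V_i}(A)$.

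To finish, I would invoke the orthogonal-direct-sum structure $\Delta=\bigoplus_{i=1}^d V_i$ to write $P_\Delta=\sum_{i=1}^d P_{V_i}$, the projection onto an orthogonal direct sum being the sum of the projections onto its summands. This yields $\lim_{\ell\to\infty}L^{(\ell)}=\sum_{i=1}^d P_{V_i}(A)=P_\Delta(A)$, the limit being attained already after one sweep.

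The main point demanding care is the reduction of each inner minimization to the single projection $P_{V_i}$, namely checking that the increment $U_i$ genuinely explores all of $\spn\{\id_{n_i}\}^{\bot}$ so that the minimization really is a projection onto the full $V_i$; once this is in place, the orthogonality of the $V_i$ does the rest and collapses the stated limit to exact one-step convergence. Should one wish to avoid leaning on orthogonality, the fallback is the classical theory of exact cyclic block-coordinate descent on the strictly convex quadratic $f$: the objective decreases monotonically and is bounded below, the iterates remain bounded, and every limit point is block-wise stationary, hence---by convexity---the global minimizer $P_\Delta(A)$; there the only delicate issue is upgrading subsequential convergence to convergence of the whole sequence, which strict convexity of $f$ on $\Delta$ supplies.
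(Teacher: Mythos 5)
Your proof is correct, but it follows a genuinely different route from the paper's. The paper treats the scheme as a generic alternating/greedy minimization: it notes the monotone decrease of $\left\| A-P_{\Delta}^{(\ell)}(A)\right\|$, extracts a convergent subsequence with limit $L_A\in\Delta$, and then argues by contradiction that $L_A$ must equal $P_{\Delta}(A)$ by exhibiting a convex combination $\lambda L_A+(1-\lambda)P_{\Delta}(A)\in\Delta$ strictly closer to $A$. You instead exploit the structural content of Theorem~\ref{thdelta}: since the summands $V_i=\spn\{\id_N\}^{\perp_i}$ are mutually orthogonal, each inner minimization is exactly $P_{V_i}$ applied to the current residual, the cross terms from the other blocks are annihilated, and therefore $P_{V_i}(R_i)=P_{V_i}(A)$ independently of the other blocks; one sweep already produces $\sum_i P_{V_i}(A)=P_{\Delta}(A)$ and the iteration is stationary afterwards. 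Your argument is sharper (exact convergence after a single sweep rather than asymptotic convergence) and arguably more rigorous: the paper's closing step --- that finding a point of $\Delta$ closer to $A$ than $L_A$ ``contradicts the definition of $L_A$'' --- is not fully substantiated, since $L_A$ is only a subsequential limit of the iterates, not by definition a minimizer; your decoupling via orthogonality sidesteps that issue entirely. The only points needing care in your write-up, which you do address, are that $U_i$ sweeps out all of $\spn\{\id_{n_i}\}^{\perp}$ (so the inner step really is the projection onto the full $V_i$) and the implicit correction of the sign typo in the displayed objective, which must read $\bigl\| A-\bigl(\sum_{k<i}\cdots+\id_{[n_i]}\otimes(X_i^{(\ell-1)}+U_i)+\sum_{k>i}\cdots\bigr)\bigr\|$ for the procedure to make sense.
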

\begin{proof}
Recall that $P_{\Delta}(A)$ solves the problem
	$$
	\min_{A^* \in \Delta} \left\| A-A^* \right\|.
	$$
To simplify notation put $P^{(\ell)}_{\Delta}(A) = \sum_{k=1}^d \id_{[n_k]} \otimes X_k^{(\ell)}$ for $\ell \ge 0.$
By construction we have that
	    $$
	    \| A-P_{\Delta}^{(1)}(A) \| \geq \| A-P_{\Delta}^{(2)}(A) \| \geq \dots \geq \| A-P_{\Delta}^{(\ell)}(A) \| \geq \cdots \geq 0,
	    $$
	    holds. Since the sequence $\{ P_{\Delta}^{(\ell)}(A) \}_{\ell \in \N}$ is bounded, there is a convergent subsequence also denoted by $\{ P_{\Delta}^{(\ell)}(A) \}_{\ell \in \N}$, so that
	    $$
	    L_A = \lim\limits_{\ell \to \infty} P_{\Delta}^{(\ell)}(A) \in \Delta.
	    $$
		If $L_A = P_{\Delta}(A)$, the theorem holds. Otherwise, assume that $L_A \neq P_{\Delta}(A),$ 
	    then it is clear that 
	    $$
	    \| A- P_{\Delta}(A)\| \leq \| A - L_A \|.
	    $$
	    Suppose that $ \| A-P_{\Delta}(A) \| < \| A - L_A \|$ and let $\lambda \in (0,1).$  Now, 
	    consider the linear combination $\lambda L_A+(1-\lambda)P_{\Delta}(A).$ Since $L_A, P_{\Delta}(A) \in \Delta$, they can be written as
	    $$
	    L_A=\sum_{i=1}^d \id_{[n_i]} \otimes A_i, \quad \text{and} \quad P_{\Delta}(A)=\sum_{i=1}^d \id_{[n_i]} \otimes A_i^*,
	    $$
	    so $\lambda L_A+(1-\lambda)P_{\Delta}(A)=\sum_{i=1}^d \id_{[n_i]} \otimes \left( \lambda A_i + (1-\lambda)A_i^* \right) \in \Delta$. Hence,
	    $$
	    \| A-P_{\Delta}(A) \| < \| A-  \left( \lambda L_A+(1-\lambda)P_{\Delta}(A) \right) \| < \| A - L_A \|.
	    $$
	    \begin{figure}[h]
	        \centering
	       \includegraphics[width=0.9\linewidth]{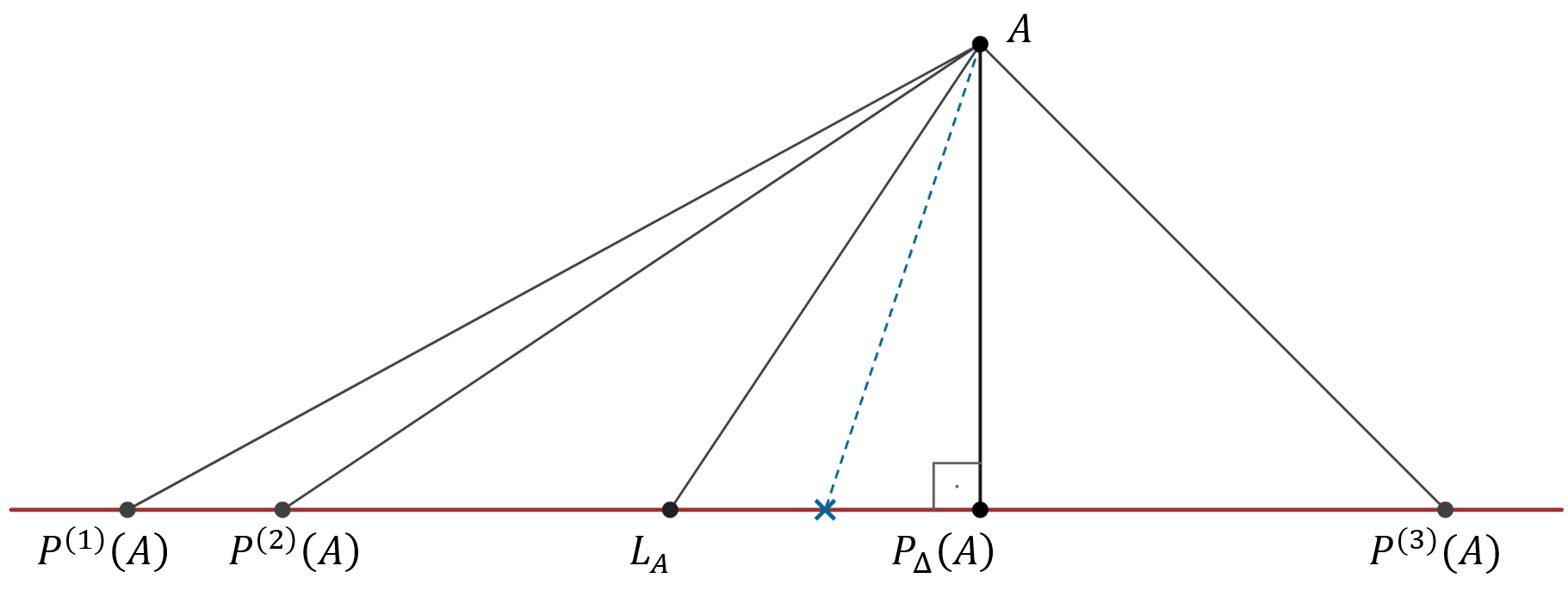}
	        \caption{Situation described in reasoning by R.A.A.}
	    \end{figure}
	    \noindent
	    That is, we have found $d$ matrices $Z_i=\lambda A_i + (1-\lambda)A_i^*$, $i=1,\dots,d$, such that
	    $$
	    \left\|A - L_A \right\|= \left\| A-\sum_{i=1}^d \id_{[n_i]} \otimes A_i\right\| > \left\| A-\sum_{i=1}^d \id_{[n_i]} \otimes Z_i \right\|,
	    $$
	    which is a contradiction with the definition of $L_A$.
\end{proof}


The previous result allows us to describe the procedure to obtain the Laplacian approximation of a square matrix, in the form of an algorithm. We can visualize the complete algorithm in the form of pseudocode in Algorithm \ref{code}.
\begin{algorithm}[h]
	\caption{Laplacian decomposition Algorithm}\label{code}
	\begin{algorithmic}[1] 
		\Procedure{Lap}{$A^*,\texttt{iter\_max},\texttt{tol}$} \vspace{0.1mm} 
		\State $A = A^* - (\tr(A)/N) \id_N$, $\texttt{iter}=1$, $\texttt{Lap} = 0$ \vspace{0.1mm}
		\While{$\texttt{iter} < \texttt{iter\_max}$} \vspace{0.1mm}
		\State $A \leftarrow A - \texttt{Lap} $ \vspace{0.1mm}
		\For{$k=1,2,\ldots,d$} \vspace{0.1mm}
		\State $P_k(A)=\id_{n_1}\otimes \dots \otimes \id_{n_{k-1}} \otimes X_k \otimes \id_{n_{k+1}} \otimes \dots \otimes \id_{n_d}$
		\State $X_k \leftarrow \min_{X_k} \|A - \sum_{i=1}^k P_i(A) \|$ \vspace{0.1mm}
		\State $\texttt{Lap} = \texttt{Lap} + P_k(A)$ \vspace{0.1mm}
		\EndFor \vspace{0.1mm}
		\If {$\| A - \texttt{Lap}\| < \texttt{tol}$} \textbf{goto} 14 \vspace{0.1mm} 
		\EndIf \vspace{0.1mm}
		\State $\texttt{iter}=\texttt{iter}+1$ \vspace{0.1mm}
		\EndWhile \vspace{0.1mm}
		\State \textbf{return} $\texttt{Lap}$ \vspace{0.1mm}
		\EndProcedure
	\end{algorithmic}
\end{algorithm}

\subsection{Numerical Examples}
\subsubsection{Example 1: The adjacency matrix of a simple graph}
First, let us show an example in which the projection $P_{\Delta}(A)$ coincides with $A$ and how the tensor representations is provided by the aforementioned proposed algorithm.
Let us consider the simple graph $G(V,E)$, with $V=\{1,2,\ldots,6\}$ the set of nodes and $E=\{(1,4),(2,3),(2,5),(3,6),(5,6)\}$ the set of edges. Then, the adjacency matrix of $G$ is 
\begin{equation*} 
    A = 
	\begin{pmatrix}
	0 & 1 & 0 & 1 & 0 & 0 \\
	1 & 0 & 1 & 0 & 1 & 0 \\
	0 & 1 & 0 & 0 & 0 & 1 \\
	1 & 0 & 0 & 0 & 1 & 0 \\
	0 & 1 & 0 & 1 & 0 & 1 \\
	0 & 0 & 1 & 0 & 1 & 0 
	\end{pmatrix}.
\end{equation*}
We want to find a Laplacian decomposition of the matrix $A \in \R^{6 \times 6}$. Since $\tr(A)=0$, we can do this by following the iterative scheme given by Theorem \ref{thlaplacian}. So, we look for $X_1 \in \R^{2 \times 2}$, $X_2 \in \R^{3 \times 3}$ matrices such that
$$
P_{\Delta}(A)=X_1 \otimes \id_{n_2}+\id_{n_1} \otimes X_2,
$$
where $n_1=2, n_2=3$. We proceed according to the algorithm: 
\begin{enumerate}
	\item[1.] Compute $X_1:$
    \begin{equation*}
    \min_{X_1} \| A- X_1 \otimes \id_{n_2} \| \, \Rightarrow \, X_1 = 
        \begin{pmatrix}
        0 & 1 \\
        1 & 0
        \end{pmatrix}.
    \end{equation*} 
    
    \item[2.] Compute $X_2:$
    \begin{equation*}
    \begin{matrix}
    \begin{aligned}
    \min_{X_2} \| A - P_1(A) - \id_{n_1} \otimes X_2\| \, \Rightarrow \, X_2 =     \begin{pmatrix}
        0& 1 & 0 \\
        1 & 0 & 1\\
        0 & 1 & 0
        \end{pmatrix}.
    \end{aligned}
    \end{matrix}
    \end{equation*} 
\end{enumerate}
Since the residual values is $||A-P_{\Delta}(A)||=0,$ the matrix $A \in \Delta$ and we can write it as
$$
A = \begin{pmatrix}
0 & 1 \\
1 & 0
\end{pmatrix} \otimes \id_{n_2} + \id_{n_1} \otimes 
\begin{pmatrix}
0& 1 & 0 \\
1 & 0 & 1\\
0 & 1 & 0
\end{pmatrix} = P_{\Delta}(A).
$$

\subsubsection{Example 2: A bigger sparse matrix}
Now, let us consider the following sparse matrix in $\GL(\R^{30})$,
	\begin{equation*}
	A=
	\left(
	\begin{array}{rrr|rrr}
	    T_1 & 2I_5 & -I_5 &  & & \\
		2I_5 & T_2 & 2I_5 & & I_{15} & \\ 
	    I_5 & 2I_5 & T_1 & & & \\
		\hline
	    & & & T_1 & 2I_5 & -I_5 \\
		& -I_{15} & & 2I_5 & T_2 & 2I_5 \\ 
	    & & &  I_5 & 2I_5 & T_1 \\
	\end{array}
	\right),
	\end{equation*}
	where
	\begin{equation*}
	T_1 =
    \begin{pmatrix*}[r]
     3 & \phantom{-}2 & \phantom{-}1 & \phantom{-}0 & -2 \\
    2 & 3 & 2 & 1 & 0 \\
    1 & 2 & 3 & 2 & 1 \\
    0 & 1 & 2 & 3 & 2 \\
    -2 & 0 & 1 & 2 & 3 \\
    \end{pmatrix*}, \; T_2 =
    \begin{pmatrix*}[r]
     -1 & 2 & 1 & 0 & -2 \\
     2 & -1 & 2 & 1 & 0 \\
     1 & 2 & -1 & 2 & 1 \\
     0 & 1 & 2 & -1 & 2 \\
    -2 & 0 & 1 & 2 & -1 \\
    \end{pmatrix*}
\end{equation*}
\medskip\noindent
and $I_k$ es the identity matrix of size $k \times k$. We can visualize the matrix graphically with the \textit{Mathematica} command $\texttt{MatrixPlot[A]}$,
In this case $\tr(A)=50 \neq 0,$ so instead of looking for the Laplacian approximation of $A$, we will look for it of $$\hat{A}=\left(A-\dfrac{\tr(A)}{30}\id_{30}\right),$$ which has null trace; see Figure \ref{fig:sparse_mathematica}.

\begin{figure}[h]
    \centering
    \hspace{-5mm}
	\includegraphics[width=0.6\linewidth]{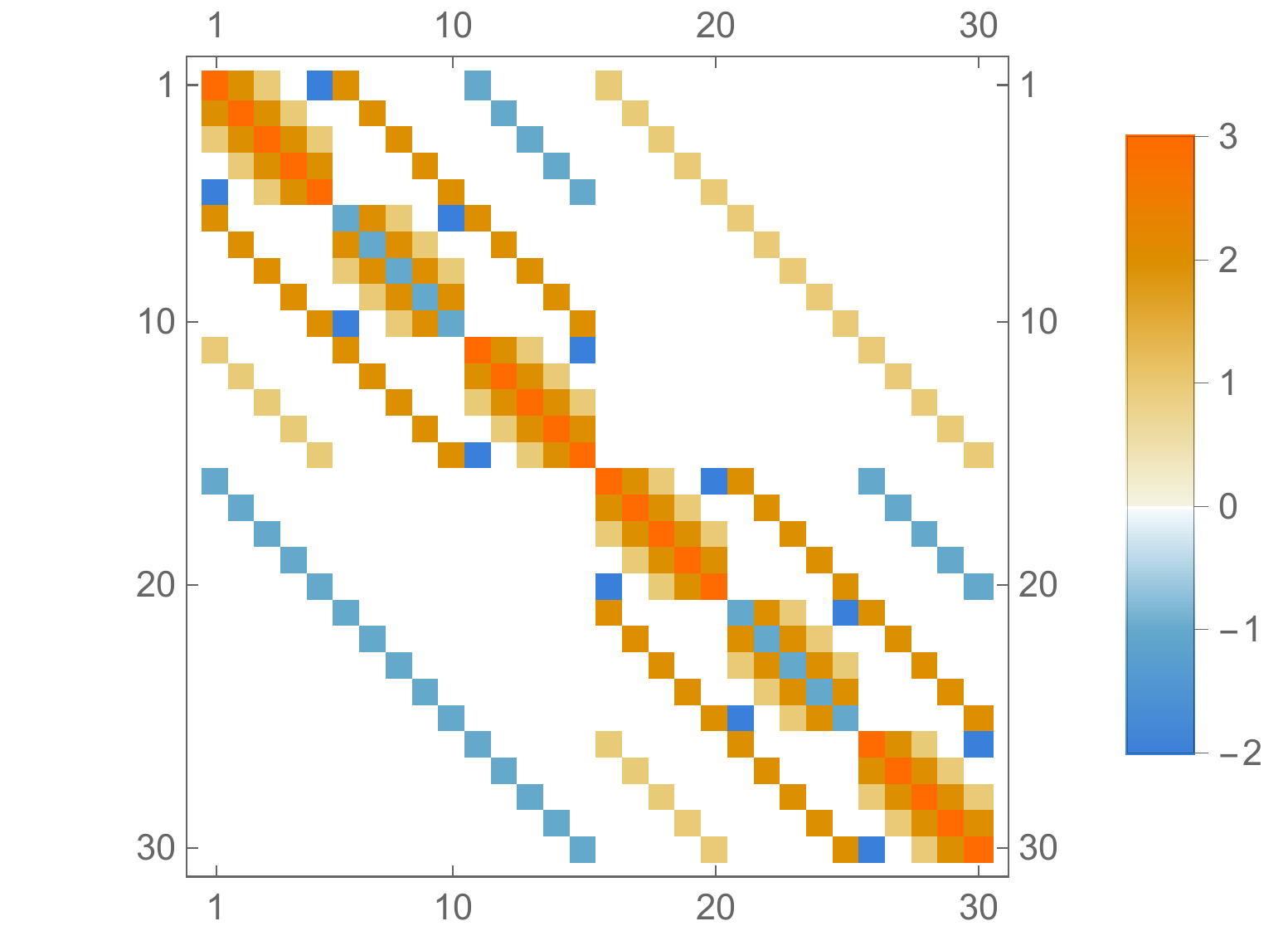}
	\caption{Representation of sparse matrix $A$ using \textit{Mathematica} \cite{Mathematica}.}\label{fig:sparse_mathematica}
\end{figure}\medskip

Again, we proceed according to the algorithm:
\begin{enumerate}
	\item[1.] Compute $X_1:$
    \begin{equation*}
    \min_{X_1} \| \hat{A}- X_1 \otimes \id_{n_2}\otimes \id_{n_3} \| \, \Rightarrow \, X_1 = 
        \begin{pmatrix}
        \phantom{-}0 & 1 \\
        -1 & 0
        \end{pmatrix}.
    \end{equation*} 
    
    \item[2.] Compute $X_2:$
    \begin{equation*}
    \min_{X_2} \| \hat{A} - X_1 \otimes \id_{n_2}\otimes \id_{n_3} - \id_{n_1} \otimes X_2 \otimes \id_{n_3}\| \, \Rightarrow \, X_2 =     
        \begin{pmatrix}
        \frac{4}{3}& \phantom{-}2 & -1 \\
        2 & -\frac{8}{3} & \phantom{-}2\\
        1 & \phantom{-}2 & \phantom{-}\frac{4}{3}
        \end{pmatrix}.
    \end{equation*} 
    
    \item[3.] Compute $X_3:$
    \begin{equation*}
    \begin{matrix}
    \begin{aligned}
     \min_{X_3} \| \hat{A} - X_1 \otimes \id_{n_2}\otimes \id_{n_3} - \id_{n_1} \otimes X_2 \otimes \id_{n_3}- \id_{n_1}  \otimes \id_{n_2} \otimes X_3\| \\ & \Rightarrow \, X_3 =     \begin{pmatrix}
        \phantom{-}0 &   \phantom{-}2 &   \phantom{-}1 &   \phantom{-}0 & -2 \\
        \phantom{-}2 &   \phantom{-}0 &   \phantom{-}2 &   \phantom{-}1 & \phantom{-}0 \\
        \phantom{-}1 &   \phantom{-}2 &   \phantom{-}0 &   \phantom{-}0 & \phantom{-}2 \\
        \phantom{-}0 &   \phantom{-}1 &   \phantom{-}2 &   \phantom{-}0 & \phantom{-}2 \\
        -2 & \phantom{-}0 & \phantom{-}1 & \phantom{-}2 & \phantom{-}0
        \end{pmatrix}.
    \end{aligned}
    \end{matrix}
    \end{equation*} 
\end{enumerate}\medskip
The residue of the approximation of $\hat{A}$ is
$
\| \hat{A} - P_{\Delta}(\hat{A}) \| = 0,
$
so, following Corollary~\ref{optimal_decom1}, we can write the original matrix $A$ as 
$$
A = \dfrac{\tr(A)}{30}\id_{30} + X_1 \otimes \id_{n_2}\otimes \id_{n_3} + \id_{n_1} \otimes X_2 \otimes \id_{n_3}+ \id_{n_1}  \otimes \id_{n_2} \otimes X_3.
$$
\medskip
Note that the first term is
$$
\dfrac{\tr(A)}{30}\id_{30} = \begin{pmatrix}
\frac{5}{3} & 0 \\
0 & \frac{5}{3}
\end{pmatrix} \otimes \id_{n_2} \otimes \id_{n_3},
$$
\medskip\noindent
and hence $A$ can be written as:
\begin{equation*}
    \begin{matrix}
    \begin{aligned}
    A = \begin{pmatrix}
        \frac{5}{3} & 1 \\
        -1 & \frac{5}{3}
        \end{pmatrix} \otimes \id_{n_2 n_3} + \id_{n_1} \otimes \begin{pmatrix}
        \frac{4}{3}& \phantom{-}2 & -1 \\
        2 & -\frac{8}{3} & \phantom{-}2\\
        1 & \phantom{-}2 & \phantom{-}\frac{4}{3}
        \end{pmatrix} \otimes \id_{n_3} + \id_{n_1 n_2} \otimes \begin{pmatrix}
        \phantom{-}0 &   \phantom{-}2 &   \phantom{-}1 &   \phantom{-}0 & -2 \\
        \phantom{-}2 &   \phantom{-}0 &   \phantom{-}2 &   \phantom{-}1 & \phantom{-}0 \\
        \phantom{-}1 &   \phantom{-}2 &   \phantom{-}0 &   \phantom{-}0 & \phantom{-}2 \\
        \phantom{-}0 &   \phantom{-}1 &   \phantom{-}2 &   \phantom{-}0 & \phantom{-}2 \\
        -2 & \phantom{-}0 & \phantom{-}1 & \phantom{-}2 & \phantom{-}0
        \end{pmatrix}. \normalsize
    \end{aligned}
    \end{matrix}
\end{equation*}
 
\section{Conclusions}\label{conclusions}
We have presented a result to approximate a generic square matrix by its Laplacian form, and thus decompose it as the sum of two linearly independent matrices. This decomposition is motivated by the fact that tensor algorithms are more efficient when working with Laplacian matrices. We have also described the procedure to perform this approximation in the form of an algorithm and illustrated how it works on some basic examples.\medskip  

With the proposed algorithm we may provide an alternative way to solve linear system $ A\xx = \bb $. Due to its structure, this matrix decomposition can be interesting when studying sparse matrices, matrices that come from the discretization of a PDE, or adjacency matrices of simple graphs, among others. We will explore the computational gains of this approach in  different contexts in forthcoming works.




\section*{Acknowledgments} 
This work was supported by the Generalitat Valenciana and the European Social Found under Grant [number ACIF/2020/269)]; Ministerio de Ciencia, Innovación y Universidades under Grant [number RTI2018-093521-B-C32];  Universidad CEU Cardenal Herrera under Grant [number  INDI20/13].


\subsection*{Conflict of interest}
The authors declare no potential conflict of interests.






\nocite{*}
\bibliography{wileyNJD-VANCOUVER}%



\end{document}